\theoremstyle{plain}
\newtheorem{theo}{Theorem}[section]
\newtheorem{lemma}[theo]{Lemma}
\newtheorem{prop}[theo]{Proposition}
\newtheorem{corollary}[theo]{Corollary}
\newtheorem{remark}[theo]{Remark}
\newtheorem{example}[]{Example}
\theoremstyle{definition}
\newtheorem{definition}[]{Definition}
\newcommand{\F}{\mathbb{F}}
\newcommand{\Z}{\mathbb{Z}}
\newcommand{\Q}{\mathbb{Q}}
\newcommand{\lowoverline}[1]{%
  \overline{\smash{#1}\vphantom{b
 \bar \delta^n}}\vphantom{#1}%
}
\renewcommand{\epsilon}{\varepsilon}
\newcommand{\Gal}{\mathop{\mathrm{Gal}}\nolimits}
\newcommand{\Aut}{\mathop{\mathrm{Aut}}\nolimits}
\newcommand{\Hol}{\mathop{\mathrm{Hol}}\nolimits}
\newcounter{enumi_saved}
\def\imod#1{\allowbreak\mkern10mu({\operator@font mod}\,\,#1)}
\newcommand{\nota}[1]{\marginpar{\footnotesize\fbox{\parbox{\marginparwidth}{#1}}}}
\begin{document}

\title[Module braces: relations between the additive and the multiplicative groups]
      {Module braces: relations between \\ the additive and the multiplicative groups
      }
   

\author{Ilaria Del Corso}

\address[I.~Del Corso]%
        {Dipartimento di Matematica\\
          Universit\`a di Pisa\\
          Largo Bruno Pontecorvo, 5\\
          56127 Pisa\\
          Italy}
\email{ilaria.delcorso@unipi.it}

\urladdr{http://people.dm.unipi.it/delcorso/}

\subjclass[2010]{16XX}

\keywords{Skew braces, brace, radical rings, modules}   

\begin{abstract}
In this paper we define a class of braces, that we call module braces or $R$-braces, which are braces for which the additive group has also a module structure over a ring $R$, and for which the values of the gamma functions are automorphisms of $R$-modules.
This class of braces has already been considered in the literature in the case where the ring $R$ is a field: we generalise the definition  to any ring $R$, reinterpreting it in terms of the so-called gamma function associated to the brace, and prove that this class of braces enjoys all the natural properties one can require. We exhibit  explicit example of R-braces, and we study the splitting of a module braces in relation to the splitting of the ring $R$, generalising thereby Byott's result on the splitting of a brace with nilpotent multiplicative group as a sum 
of its Sylow subgroups.

The core of the paper is in the last two sections, in which, using methods from commutative algebra and number theory, we study the relations between the additive and the multiplicative groups of an $R$-brace showing that if a certain decomposition of the additive group is \emph{small} (in some sense which depends on $R$), then the additive and the multiplicative groups have the same number of element of each order. In some cases, this result considerably broadens  the range of applications of the results already known on this issue.
\end{abstract}

\thanks{The     author is   member   of
  INdAM---GNSAGA. The author gratefully  acknowledge support from the
  Departments of  Mathematics of  the Universities  of 
  Pisa.  The author has performed this activity in
    the framework of  the PRIN 2017, title  ``Geometric, algebraic and
    analytic methods in arithmetic''.}

\maketitle

\thispagestyle{empty}

\section{Introduction}

 A brace is an algebraic structure introduced by Rump in \cite{Rum07} as a generalisation  of a radical ring.  
On a radical ring $N$, besides the usual additive group structure, one can define another group operation, called the adjoint operation, giving on $N$ an additional group structure; the two operations on $N$  are linked by the  ``compatibility relation''
\begin{equation*}
\label{eq:comp-rel}
x \circ(y+z)
  =
  (x \circ y) -x
  +(x \circ z),
\end{equation*}
for $x,y,z\in N$.

A  \emph{(left) brace} is an abelian group $(N,+)$, together with an additional group structure $(N,\circ)$ such that the two operations are linked by the same compatibility relation as  that of the radical rings.

In the paper \cite{GV17}, the authors further generalised this concept defining a  \emph{skew (left) brace} as a brace without the requirement on the first group to be abelian.  For the skew brace $(N, +, \circ)$ the notation of braces is kept: $(N, +)$ is called the
\emph{additive} group, and $(N, \circ)$ the \emph{multiplicative} group.

With the paper \cite{GV17}, a systematic study was begun of the properties of these structures, which turn out to be very interesting since they naturally appear in many mathematical contexts.

First of all, we recall that skew braces can be described in terms of regular subgroups of the holomorph. In fact, for a fixed group $(N,+)$ the skew braces with additive group $(N,+)$ correspond to  the regular subgroups of ${\rm Hol}(N)$. As shown in \cite[Theorem 4.2]{GV17} (see also \cite{CDVS06, CDV17, CDV18}) the regular subgroup of the holomorph are in turn in one-to-one correspondence  with the  functions
$$\gamma\colon N\to\Aut(N)$$
characterised by the functional equation
$$\gamma(x+\gamma_x(y))=\gamma_x\gamma_y.$$
We call these functions \emph{gamma functions};  in the literature they are usually referred to as $\lambda$-functions.

The study of the Yang--Baxter equation has been  the motivation for the definition and for the study of skew braces. 
In fact, braces were introduced by Rump to study non-degenerate involutive
set-theoretic solutions of the Yang--Baxter equation, and in \cite{GV17} skew braces are introduced to
include the non-involutive case.

The study of  skew braces  is strictly related also to that of the Hopf--Galois structures on a finite Galois extension $L/K$.
%
    The work of Greither and Pareigis in~\cite{GP87} 
and that of Childs~\cite{Chi89} and Byott~\cite{Byo96}
showed that 
if $\Gamma=\Gal(L/K)$, to each Hopf--Galois structure on $L/K$ one can associate a group $(G, +)$ with the same cardinality as $\Gamma$ and such that   the
holomorph $\Hol(G)$ of  $(G, +)$ contains a regular  subgroup isomorphic to
$\Gamma$. Since classifying  the  regular   subgroups  of  $\Hol(G)$  is
equivalent to determining the operations  ``$\circ$'' on $G$ such that
$(G, +, \circ)$ is a (left) skew brace,
 the Hopf--Galois structures on an extension  with Galois group isomorphic to a group $\Gamma$ correspond bijectively to the skew braces $(G, +, \circ)$ with $(G,\circ)\cong\Gamma.$
This connection was first  observed by Bachiller in~\cite[\S 2]{Bac16}
and it is described in detail in the appendix to~\cite{SV18} by Byott and Vendramin (see also \cite{ST22b} for a slightly different point of view).

In recent years, these different approaches concurred to construct a
rich theory.  
A number of papers are devoted to the enumeration of the skew braces (or, equivalently, the Hopf--Galois structures) whose cardinality has a particular form, and their isomorphism classes
(\cite{Byo96, koh98, Byo04, Chi05, zen18, AB20-sqrf, AB20-pq,
  AB22, CCDC20}).  

Part of the literature is devoted to understanding  how
properties of $(G, +)$ influence those of $(G,\circ)$. and vice versa,
when $(G,+,\circ)$ is a skew brace  (\cite{FCC}, \cite{Byo13}, \cite{Byo15}, \cite{Bac16}, \cite{Tsa19}, \cite{Nas19}, \cite{TQ20},\cite{CDC22}).
Another part concerns the study of skew braces of a particular type, e.g., nilpotent skew braces (see \cite{CSV19, Smo22b}), bi-skew braces (see \cite{Chi19, Car20, ST22a}).

In this paper we address the latter two aspects. We define a class of braces, that we call \emph{module braces} or $R$-braces,
which are braces for which the additive group has also a module structure over a ring $R$ and for which the values of the gamma function are automorphisms of $R$-modules (see Definition~\ref{def:modulebrace}). The class of   $\Z$-braces is the class of braces defined by Rump.

$R$-brace has already been  considered in the literature  in the case where the ring $R$ is a field (see \cite{CCS15, CCS19, Smo22a, Smo22b}, where also some applications of this structure are provided). We generalise  the definition given there to any ring $R$, reinterpreting it in  terms of the gamma function associated to the brace. 
%
This class turns out to be very natural;  in fact, it enjoys  many of the natural properties one can expect, and their properties can be handily described via gamma functions.

In some cases,  this more general context ($R$-brace instead of $\Z$-brace) allows a  more conceptual view of classical results on braces, and  formulations more suitable for generalisations. 


This is the case of Proposition~\ref{prop:decomposition} in which we 
 study the splitting of a module braces in relation to the splitting of the ring $R$, generalising the splitting of a brace with nilpotent multiplicative group as a sum of its Sylow subgroups (see Remark~\ref{rem:decomposition}), which is the central point of \cite[Theorem~1]{Byo13} (see also \cite[Corollary~4.3]{CSV19} and \cite[Subs. 2.4]{CCDC20}).

We also study the relations between the additive and the multiplicative groups of an $R$-brace, in the case when the additive group is \emph{small}, in the sense of \cite[Theorem~1]{FCC} and  \cite[Theorem~2.5]{Bac16}. By introducing tools from commutative algebra and number theory we are able to obtain, in Theorem~\ref{teo:isomorphic} and its corollaries,  a complete generalisation of that results. 
%
\vskip.3cm

The paper is organized as follows.

In Section~\ref{sec:tools} we introduce all the notation and main results on skew braces we will need, restating
 the definitions and some basic results  in terms of the gamma function associated to a skew braces.
Although we will apply the machinery in the more specific context of braces, we develop it  in full generality,  as it could have an independent interest. Moreover, the general treatment requires hardly more effort and helps to place our results within the general theory.

In Section~\ref{sec:modulebraces} we define module braces  and prove that substructures can be naturally defined and have a good behaviour  with respect to the usual operations. 
We also construct explicit example of $R$-braces, and show that the brace associated with any radical ring with an $R$-algebra structure, over a commutative ring $R$, is always an $R$-brace.

As we already mentioned, in Proposition~\ref{prop:decomposition} we study the splitting of an $R$-brace in relation to the splitting of the (commutative) ring $R$, showing that, in some cases, to this decomposition corresponds a splitting of an $R$-brace. The condition we have on  $R$-braces when specified for $R=\Z$ correspond to the nilpotence of the multiplicative group of the brace and its splitting to that given in \cite[Theorem~1]{Byo13} as a sum of the Sylow subgroups.


Section~\ref{sec:smallrank} is the core of the paper. At the beginning  of the section we state, in Theorem~\ref{teo:isomorphic},  the following generalisation of \cite[Theorem~1]{FCC} and  \cite[Theorem~2.5]{Bac16}, which is proved in Section~\ref{sec:proof}. 
\vskip.3cm

{\bf Theorem.} 
{\sl
Let $p$ be a prime number, and let $D$ be a PID such that $p$ is a prime in $D$.
Let $(N,+,\circ)$ be a $D$-brace of order a power of $p$.

Assume 
that 
the number of cyclic factors of the decomposition of $N$ as a sum of 
\emph{indecomposable} cyclic $D$-module
is $<p-1$.

Then $(N,+)$ and $(N,\circ)$ have the same number of element of each order.
In particular, if $(N,\circ)$ is abelian, then $(N,+)\cong (N,\circ)$.
}
\vskip.3cm

For $D=\Z$ the previous theorem is  \cite[Theorem~2.5]{Bac16}.
A family of rings $D$ fulfilling the assumption of the previous theorem is given by the rings of integers of unramified extensions of $\Q_p$, the field of $p$-adic numbers. In   Section~\ref{sec:smallrank}, after introducing the necessary tools of number theory and commutative algebra, we show that these rings of integers play a fundamental role in this context, since every $R$-brace of order a power of $p$ is also naturally a module over one of such rings (see Proposition~\ref{prop:padic2}), showing that the assumption on the ring $D$ in the theorem is less restrictive than it might appear. This section also contains some corollaries that apply the theorem to specific cases.

The advantage of appealing to Theorem~\ref{teo:isomorphic}, instead of to \cite[Theorem~2.5]{Bac16}, when dealing with a $D$-braces, is that the condition of having few ciclic factors in the $D$-module decomposition can be much weaker than the condition  of having few ciclic factors in the $\Z$-module decomposition (see Lemma~\ref{lemma:rango}). Therefore, in some cases, the use of Theorem~\ref{teo:isomorphic}, instead of its classic form, considerably  broadens  the range of application.


In a forthcoming paper, we will show how   Theorem~\ref{teo:isomorphic} and its corollaries, can be used to make some progress in the study of Fuchs’ question on the group of units of a ring  (see Remark~\ref{rem:units}).

\section{Skew braces and the nilpotency series}
\label{sec:tools}

In this section, we briefly recall the basic of the skew braces language introduced in \cite{GV17} with a particular emphasis on gamma functions (see \cite{CDV17} and \cite{CCDC20})).
We also introduce the right series of a  skew left brace, following the notation given in \cite{JKVAV19}.
\subsection{Skew braces and ideals}

Let $N$ be a  set, and let ``$+$'' and ``$\circ$'' be two group operations on $N$.

 $(N, +, \circ)$ is a \emph{skew left brace} if 
the following brace axiom holds for $x,y,z \in N$
\begin{equation}
  \label{eq:left-brace-axiom}
  x \circ(y+z)
  =
  (x \circ y) -x
  +(x \circ z).
\end{equation}
Analogously, we say that $(N, +, \circ)$, is  \emph{skew right brace} if, for $x,y,z  \in N$,
\begin{equation}
  \label{eq:right-brace-axiom}
  (x+y)\circ z 
  =
  (x \circ z) -z
  +(y \circ z).
\end{equation}

In the following we will simply call \emph{skew brace} a skew left brace and we refer to $(N,+)$ as the additive group and to $(N,\circ)$ as the multiplicative group of the skew brace.

From the brace axiom it follows that the two group structures of a skew brace have the same unit element, which we will denote by 0.

%

For a skew brace $(N, +, \circ)$, equation \eqref{eq:left-brace-axiom} can be rewritten as 
\begin{equation}
  \label{eq:brace-axiom}  
  -x+x\circ(y+z)
  =-x +
  (x \circ y) - x
+
  (x \circ z) .
\end{equation}

If $(N, +, \circ)$ is a skew brace, then~\eqref{eq:brace-axiom}
and the fact that $(N, \circ)$ is a group yield that for all $x \in N$ the
maps $N \to N$ given by 
\begin{equation*}
  \gamma_x\colon y \mapsto -x+x\circ y
\end{equation*}
are automorphisms of $(N, +)$, and that $\gamma : (N, \circ)\to \Aut(N,+)$ defined by $\gamma(x)=\gamma_x$ is a homomorphism, namely
$$\gamma(x\circ y)=\gamma(x)\gamma(y)$$
for all $x,y\in N$.

Conversely, given a group $(N,+)$ we say that a map $\gamma : N \to \Aut(N)$ is a \emph{gamma function} if it verifies the following gamma functional equation 
\begin{equation}
\label{eq:GFE}
\gamma(x+\gamma_x(y))= \gamma(x)\gamma(y)
\end{equation}
for each $x,y\in N$.

For a given gamma function on $(N,+)$ we can define an operation on $N$ by 
\begin{equation}
\label{eq:op-pallino}
x \circ y =
x+ \gamma_x(y),
\end{equation}
for $x,y \in N.$ 

It is easy to check that given the gamma function $\gamma$ the operation $\circ$ as in \eqref{eq:op-pallino} gives a group structure on $N$ and that \eqref{eq:left-brace-axiom} holds, so 
 and $(N, +, \circ)$ is a skew brace. (See \cite[after Theorem~2.2]{CCDC20} for the explicit correspondence between the properties of $\gamma$ and those of $\circ$. Notice that there the case is considered of right skew braces.)

We also recall that a skew brace structure can be defined also starting from a group  $(N,+)$ with an additional binary operation usually, denoted by $\star$ such that  the following version of distributivity combined with associativity holds:
\begin{equation}
\label{eq:prop-star}
(x + y + x \star y) \star z = x \star z + y \star z + x \star (y\star z),\quad  x\star (y + z) =x\star y+ x\star z
\end{equation}
for all $ x, y, z\in N$,
with the additional condition that the operation $\circ$ defined by 
\begin{equation}
\label{eq:circ=*}
x\circ y=x+x\star y+y
\end{equation}
defines on $N$  a group structure (see \cite[Theorem~1.4]{CSV19}).


 In terms of the gamma function associated to the skew brace we have
\begin{equation}
\label{eq:*gamma}
 x\star y=\gamma_x(y)-y.
\end{equation}

Viceversa, given a (left) skew brace $(N, +, \circ)$ then 
$$x\star y=-x+x\circ y-y$$ 
verifies \eqref{eq:prop-star}. The construction of a skew brace via the $\star$ operation generalises that made by Rump for braces.

A left (right) \emph{brace} is a skew left (right) brace with abelian additive group and was introduced  in \cite{Rum07} as a generalisation of radical rings. Infact, 
if  $(N, +, \cdot)$ is a radical ring, taking $\star$ to be the product of the ring, and the  operation $\circ$ as in \eqref{eq:circ=*}, we get  that  $(N, +, \circ)$ is  two-sided brace, namely a brace for which both \eqref{eq:left-brace-axiom} and \eqref{eq:right-brace-axiom} hold. In this case we will refer to the operation $\circ$ as the adjoint operation, and to $(N,\circ)$ as to the adjoint group of the radical ring.

On the contrary, if $(N, +, \circ)$ is a two-sided brace, then defining 
$$x\cdot y=-x+ x\circ y-y$$
we have that $(N,+,\cdot)$ is a radical ring (see \cite{Rum06} or \cite[Prop.~1]{CJO14}). 
In this case $(1+N, \cdot)$ and $(N,\circ)$ are isomorphic groups via the map $1+x\to x$.

As for the gamma function associated to the brace $(N,+,\circ)$ arising from a radical ring,  we have 
$$\gamma_x(y)=-x+x\circ y=(x+1)y.$$ 
\begin{definition}
 A subset $A$ of a skew brace $(N, +, \circ)$ is called
 \begin{itemize}
 \item a \emph{subskew brace} if it is a subgroup both of $(N,+)$ and $(N,\circ)$;
\item a \emph{left ideal} if it is  $\gamma(N)$ invariant, namely $\gamma_x(A)\subseteq A$ for each $x\in N$, and it is a subgroup for any (and hence for both) group structures;
\item an \emph{ideal} if  it  is $\gamma(N)$ invariant and it is a normal subgroup of both $(N, +)$ and  $(N ,\circ)$.
\end{itemize}
\end{definition}
 Clearly any ideal is a left ideal, and any left ideal is a subskew brace. The converse does not hold in general. 
 
Notice  that the gamma function of a subskew brace is the restriction of the gamma function of the skew brace.
 
 We also recall that 
 if $I$ is an ideal of the skew brace $N$, then $N/I$  has a structure of quotient group for both the operations, so it is  a skew brace.

Two skew braces $N$ and $M$ are called \emph{isomorphic} if there exists a map $f\colon N\to M$  which is an isomorphism for both the additive and the multiplicative structures.
\begin{lemma}
\label{lemma:iso}
Let $(M, + , \circ)$ and $(N, +',\circ')$ be isomorphic skew braces.
Let  $\gamma$ and $\gamma'$ be the gamma functions attached to $M$ and $N$, respectively. Then, for each skew brace isomorphism $f\colon M\to N$, and for each $x\in N$, the following holds
\begin{equation}
\label{eq:iso}
\gamma'_x=f\gamma_{f^{-1}(x)}f^{-1}.
\end{equation}
\end{lemma}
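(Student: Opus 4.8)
The statement is a transport-of-structure identity, so the plan is to verify it directly by evaluating both sides on an arbitrary element of $N$ and unwinding the definition of the gamma function. Recall from the discussion preceding the lemma that the gamma function is recovered from the two operations by $\gamma_a(b)=-a+a\circ b$ on $M$, and likewise $\gamma'_x(y)=-'x+'(x\circ' y)$ on $N$, where $-'$ denotes the inverse in the additive group $(N,+')$. Since $f\colon M\to N$ is an isomorphism for both group structures, I may use that $f(u+v)=f(u)+'f(v)$, that $f(u\circ v)=f(u)\circ' f(v)$, and --- because a homomorphism of additive groups preserves inverses --- that $f(-u)=-'f(u)$.

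Fix $x,y\in N$ and put $a=f^{-1}(x)$, $b=f^{-1}(y)$. Then
\[
 f\,\gamma_{f^{-1}(x)}\,f^{-1}(y)=f\bigl(\gamma_a(b)\bigr)=f\bigl(-a+a\circ b\bigr)=f(-a)+'f(a\circ b)=-'f(a)+'\bigl(f(a)\circ' f(b)\bigr)=-'x+'(x\circ' y)=\gamma'_x(y).
\]
As $y$ was arbitrary, this yields $\gamma'_x=f\,\gamma_{f^{-1}(x)}\,f^{-1}$ for every $x\in N$, which is exactly \eqref{eq:iso}.

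There is essentially no obstacle here beyond bookkeeping: the only step that must not be glossed over is that $f$, being an isomorphism of the additive groups, sends the additive inverse $-a$ in $M$ to the additive inverse $-'f(a)$ in $N$ rather than to some unrelated element, and that the common identity $0$ of each brace is respected, so that $f(0)=0$ makes both sides consistent. As a sanity check one may note that the right-hand side $f\,\gamma_{f^{-1}(x)}\,f^{-1}$ is visibly an automorphism of $(N,+')$, being a conjugate by the additive isomorphism $f$ of the additive automorphism $\gamma_{f^{-1}(x)}$, in agreement with $\gamma'_x\in\Aut(N,+')$.
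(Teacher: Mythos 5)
Your proof is correct and is essentially the same as the paper's: both evaluate on an arbitrary $y\in N$, write $x=f(a)$, $y=f(b)$, and use that $f$ is a homomorphism for both operations (hence preserves additive inverses) to identify $\gamma'_x(y)$ with $f\gamma_a(b)$. The only difference is that you start from the right-hand side and the paper starts from the left, which is immaterial.
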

\begin{proof}
Let $x, y\in N$ and $a,b\in  M$ be such that $f(a)=x$ and $f(b)=y$. Then,  
\begin{equation*}\gamma'_x(y)=-'x+'x\circ' y=-'f(a)+' f(a)\circ' f(b)=f(-a+a\circ b) =f\gamma_a(b).
\vspace{-\baselineskip}
\end{equation*}
\end{proof}
The following corollary is an obvious consequence of the analogue result for groups and the previous lemma.
\begin{corollary}
An isomorphism between two skew braces $M$ and $N$  induces a one to one correspondence between the subskew braces, the left ideals and the ideals of $M$ and $N$.
\end{corollary}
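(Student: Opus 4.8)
The plan is to build the correspondence directly from the isomorphism $f\colon M\to N$ and its inverse, exactly as one does for a single group, then check that the brace-theoretic structure is preserved. First I would recall that $f$ is by hypothesis simultaneously an isomorphism of $(M,+)\to(N,+')$ and of $(M,\circ)\to(N,\circ')$, so for \emph{subskew braces} the statement is essentially immediate: a subset $A\subseteq M$ is a subgroup of both $(M,+)$ and $(M,\circ)$ if and only if $f(A)$ is a subgroup of both $(N,+')$ and $(N,\circ')$, since group isomorphisms carry subgroups to subgroups bijectively in both directions. Thus $A\mapsto f(A)$ is a bijection between the subskew braces of $M$ and those of $N$, with inverse $B\mapsto f^{-1}(B)$.

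Next I would upgrade this to \emph{left ideals} and \emph{ideals} by showing that $f$ also respects $\gamma$-invariance and normality. Normality is again purely group-theoretic: a subgroup $A$ is normal in $(M,+)$ (resp.\ $(M,\circ)$) iff $f(A)$ is normal in $(N,+')$ (resp.\ $(N,\circ')$), since conjugation is transported by the isomorphism. For $\gamma$-invariance the key input is Lemma~\ref{lemma:iso}, which gives $\gamma'_x=f\,\gamma_{f^{-1}(x)}\,f^{-1}$ for every $x\in N$. If $A$ is a left ideal of $M$, then for any $x\in N$ we have
\[
\gamma'_x\bigl(f(A)\bigr)=f\,\gamma_{f^{-1}(x)}\,f^{-1}\bigl(f(A)\bigr)=f\,\gamma_{f^{-1}(x)}(A)\subseteq f(A),
\]
using $\gamma_{f^{-1}(x)}(A)\subseteq A$ since $A$ is $\gamma(M)$-invariant and $f^{-1}(x)$ ranges over all of $M$ as $x$ ranges over $N$. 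Hence $f(A)$ is $\gamma(N)$-invariant, and combined with the subgroup/normality transport it is a left ideal (resp.\ ideal) of $N$. The reverse implication follows by applying the same argument to $f^{-1}$, whose associated relation is the inverse conjugation of \eqref{eq:iso}.

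I expect no genuine obstacle here, as the corollary is stated to be an ``obvious consequence'' of Lemma~\ref{lemma:iso} together with the analogous facts for groups; the only point requiring a moment of care is the $\gamma$-invariance step, where one must feed the conjugation formula \eqref{eq:iso} in correctly and observe that $f^{-1}(x)$ exhausts $M$ so that the hypothesis $\gamma_a(A)\subseteq A$ (for all $a\in M$) can be invoked. Everything else is the standard fact that a group isomorphism induces an inclusion-preserving bijection on subgroups that restricts to normal subgroups, applied in parallel for the two operations $+$ and $\circ$.
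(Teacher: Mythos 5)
Your proposal is correct and follows exactly the route the paper intends: the paper gives no explicit proof, stating only that the corollary is an obvious consequence of Lemma~\ref{lemma:iso} and the analogous facts for group isomorphisms, which is precisely the transport-of-structure argument you carry out (subgroups and normality handled group-theoretically in parallel for $+$ and $\circ$, and $\gamma$-invariance via the conjugation formula \eqref{eq:iso}). No gaps; your explicit verification of the $\gamma$-invariance step is the only part that needed writing out, and you did it correctly.
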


\subsection{Splitting of skew braces}

The direct product of two skew braces $L$ and $M$  is the skew brace on the set $L\times M$  with additive (resp. multiplicative) group given by the direct product of the additive (resp. multiplicative) groups of $L$ and $M$. 

If $\gamma_L$ and $\gamma_M$ are the gamma function associated to $L$ and $M$ respectively, then the gamma function associated to $L\times M$ is  the map which associate to $(l,m)\in L\times M$ the automorphism $((\gamma_L)_l, (\gamma_M)_m)$.

If a skew brace $N$ is isomorphic to $L\times M$, then $L$ and $M$ are (isomorphic to) ideals of $N$.

In \cite[Theorem 1]{Byo13} is proven  that if both groups of a finite skew brace $N$ are nilpotent then $N$ is the direct product of skew braces of prime power order. The same has been also reobtained in \cite[Theorem 2.6]{CCDC20} using the gamma functions. The argument given there can be generalised to obtain the following.
\begin{prop}
\label{prop:dirsum}
{\sl
Let $N$ be a skew brace with associated gamma function $\gamma$. Assume that the additive group of $N$ is a direct sum of subgroups $N=\bigoplus\limits_{j=1}^t I_j$, and for all $n\in N$ use the notation $n=\sum_{j=1}^t n_j$ with $n_j\in I_j$.

  The following facts are equivalent.
\begin{enumerate}[label=(\roman*)]
\item The map $f\colon N\to  I_1\times\dots\times I_t$ defined by $f(\sum_{j=1}^t x_j)=(x_1,\dots, x_t)$ is a  skew brace isomorphism.
\item $I_1, \dots, I_t$ are ideals of $N$.
\item 
For all $x,y\in N$
$$\gamma_x(y)=\sum_{j=1}^t\gamma_{x_j}(y_j)$$
\item 
For all $x,y\in N$
$$x\star y=\sum_{j=1}^t x_j\star y_j.$$
\end{enumerate}
}
\end{prop}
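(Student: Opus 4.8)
The plan is to prove the cycle (iii) $\Rightarrow$ (i) $\Rightarrow$ (ii) $\Rightarrow$ (iii) together with the equivalence (iii) $\Leftrightarrow$ (iv), throughout reading the right‑hand sides of (iii) and (iv) as genuine component decompositions along $N=\bigoplus_j I_j$ (so that in particular $\gamma_{x_j}(y_j)\in I_j$ and $x_j\star y_j\in I_j$, which is what makes those sums the canonical decompositions). First I would fix notation: let $\pi_j\colon N\to I_j$ be the projections attached to the internal direct sum; since the summands are normal in $(N,+)$ and centralise one another, each $\pi_j$ is an additive homomorphism with $\sum_j\pi_j=\mathrm{id}$, so $(x+z)_j=x_j+z_j$. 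With this in hand, (iii) $\Leftrightarrow$ (iv) is the routine part: substituting $x\star y=\gamma_x(y)-y$ and $x_j\star y_j=\gamma_{x_j}(y_j)-y_j$ and using additivity of the $\pi_j$ converts one identity into the other.

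For (iii) $\Rightarrow$ (i), the component reading of (iii) gives $\gamma_{x_j}(y_j)\in I_j$ for $x_j,y_j\in I_j$, whence $x_j\circ y_j=x_j+\gamma_{x_j}(y_j)\in I_j$; thus each $I_j$ is closed under $\circ$ and under $\circ$‑inverses, i.e.\ a subskew brace, and the product skew brace $I_1\times\cdots\times I_t$ is defined. The map $f$ is the canonical additive isomorphism, so it only remains to check it respects $\circ$: computing componentwise, $f(x\circ y)_j=(x+\gamma_x(y))_j=x_j+(\gamma_x(y))_j=x_j+\gamma_{x_j}(y_j)=x_j\circ y_j$ by (iii), so $f(x\circ y)=f(x)\circ f(y)$ and $f$ is a skew brace isomorphism.

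The implication (i) $\Rightarrow$ (ii) is then the general fact recalled just before the statement — that the factors of a direct product of skew braces are (isomorphic to) ideals — transported through $f$ via the corollary that an isomorphism carries ideals to ideals.

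The heart of the argument, and the step I expect to be the main obstacle, is (ii) $\Rightarrow$ (iii). From $\gamma(N)$‑invariance of each ideal $I_j$ one gets at once that $\gamma_x$ preserves the decomposition, so $(\gamma_x(y))_j=\gamma_x(y_j)\in I_j$; the real content is the \emph{decoupling} $\gamma_x(y_j)=\gamma_{x_j}(y_j)$, i.e.\ that $\gamma_x|_{I_j}$ depends only on the component $x_j$. This is exactly where $\gamma$‑invariance alone fails and the full ideal property — normality in $(N,\circ)$ — must be used. I would bring in the subgroup $J_j:=\bigoplus_{l\ne j}I_l$, which is again an ideal (a finite sum of ideals is an ideal, as one checks by realising $A+B$ as the preimage in $N$ of the image of $B$ in $N/A$ and inducting). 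Since $I_j\hookrightarrow N\twoheadrightarrow N/J_j$ is a skew brace isomorphism, the induced gamma function on the quotient shows that the class of $\gamma_x(y_j)$ modulo $J_j$ depends only on $x+J_j=x_j+J_j$; as $\gamma_x(y_j)\in I_j$ and $I_j\cap J_j=0$, that class determines $\gamma_x(y_j)$, and evaluating at $x=x_j$ gives $\gamma_x(y_j)=\gamma_{x_j}(y_j)$. Summing over $j$ yields (iii) and closes the cycle.
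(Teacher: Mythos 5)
Your proof is correct, and it takes a partly different route from the paper's. The implications (iii)$\Rightarrow$(i) (the componentwise check that $f$ respects $\circ$), (i)$\Rightarrow$(ii), and the equivalence (iii)$\Leftrightarrow$(iv) via $x\star y=\gamma_x(y)-y$ coincide with what the paper does. The genuine difference is in how you close the cycle: the paper proves (ii)$\Rightarrow$(i) by invoking \cite[Theorem~4.2]{CSV19} and then deduces (iii) from (i) through the transformation rule $\gamma'_x=f\gamma_{f^{-1}(x)}f^{-1}$ of Lemma~\ref{lemma:iso}, whereas you prove (ii)$\Rightarrow$(iii) directly and self-containedly: passing to the quotient by the complementary ideal $J_j=\bigoplus_{l\ne j}I_l$ shows that $\gamma_x|_{I_j}$ depends only on $x+J_j=x_j+J_j$, and injectivity of $I_j\to N/J_j$ upgrades the resulting congruence to the equality $\gamma_x(y_j)=\gamma_{x_j}(y_j)$. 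This buys you independence from the external citation and makes visible exactly where normality in $(N,\circ)$, rather than mere $\gamma(N)$-invariance, enters. Finally, your insistence on reading the sums in (iii) and (iv) as genuine component decompositions (so that $\gamma_{x_j}(y_j)\in I_j$) is not a superfluous hypothesis but a necessary one: for the brace on $(\Z/p\Z)^2$ with $\gamma_{(a,b)}(c,d)=(c,ac+d)$ and the two coordinate subgroups $I_1,I_2$, the displayed identities of (iii) and (iv) hold verbatim while $I_1$ is not even closed under $\circ$; the paper's own proof of (iii)$\Rightarrow$(i) uses the same reading tacitly.
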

\begin{proof}
$(i)\Rightarrow (ii)$ is clear, and $(ii)\implies (i)$ is \cite[Theorem~4.2]{CSV19}.

$(i)\Rightarrow (iii)$ follows by recalling that the gamma functions associated to the ideals $I_j$ are the restrictions of $\gamma$, so, denoting by $\gamma'$ the  gamma function  associated to their direct product, we have
$$\gamma'_{(x_1,\dots,x_t)}=(\gamma_{x_1},\dots,\gamma_{x_t}).$$
Therefore, using \eqref{eq:iso} we compute
$$\gamma_x(y)=f^{-1}\gamma'_{f(x)}(f(y))=
f^{-1}\gamma'_{(x_1,\dots,x_t)}(y_1,\dots,y_t)=f^{-1}(\gamma_{x_1}(y_1),\dots,\gamma_{x_t}(y_t))=
\sum_{j=1}^t\gamma_{x_j}(y_j).$$
As for the converse, we have to prove that the map $f$, which is known to be an isomorphism of additive groups, is also a morphism of multiplicative groups, namely
$$f(x\circ y)=f(x)\circ' f(y)$$
where $\circ'$ denotes the product in $I_1\times\dots\times I_t$.
Now, 
\begin{equation*}
\begin{aligned}
f(x\circ y)=&f(x+\gamma_x(y))
=f(\sum_{j=1}^tx_j+\sum_{j=1}^t\gamma_{x_j}(y_j))\\
=&(x_1+\gamma_{x_1}(y_1),\dots,x_t+\gamma_{x_t}(y_t))
=(x_1\circ y_1,\dots, x_t\circ y_t)
=f(x)\circ'f(y).
\end{aligned}
\end{equation*}
Finally, the equivalence from $(iii)$ and $(iv)$ easily follows from \eqref{eq:*gamma}.
\end{proof}

\subsection{Nilpotency and solubility of a skew brace}

The operation $\star$ associated to a skew brace $(N, +,\circ)$ is a commutator in ${\rm Hol}(N)=\Aut(N)\ltimes N$, namely
\begin{equation}
\label{eq:op-star}
x\star y= \gamma_x(y)-y=[\gamma_x,y],
\end{equation}
 and it is a measure of the difference between the two operations.

For $X,Y$ non empty subsets of a  skew brace $N$, we let $X\star Y$ be the additive subgroup of $(N, +)$ generated by the elements $x\star y$ for $x\in X$ and $y\in Y$. Again we can describe this set in terms of commutators in ${\rm Hol}(N)$, 
namely
$$X\star Y=[\gamma(X), Y].$$

We define $N^{(1)}=N^{1}=N_{(1)}=N$ and inductively
\begin{equation}
N^{(k)}=N^{(k-1)}\star N,\quad
N^{k}=N\star N^{k-1},\quad
N_{(k)}=N_{(k-1)}\star N_{(k-1)}
\end{equation}
 for all $k\ge2$. Also for $k=2$ we have the  equalities $N^{(2)}=N^2=N_{(2)}$.

\subsubsection{The right series} All the elements of the series  $N^{(k)}$ are ideals of the skew brace (see \cite[Proposition~2.1]{CSV19}), and therefore the chain  

\begin{equation}
\label{eq:filtration}
N=N^{(1)}\supseteq N^{(2)}\supseteq N^{(3)}\supseteq \dots,
\end{equation}
 is a filtration for the skew brace $N$, and is called the \emph{right series} of the skew brace.

The filtration of the right series is particularly interesting since it is has the property that the skew brace structure induced on  the quotients of the filtration is the trivial one. 
So, this  series makes it clear that there  are important links between  the additive and the multiplicative structure of  a skew  brace.

%

A skew brace $N$ is called \emph{right nilpotent} of class $m$ if $N^{(m)}\supsetneq N^{(m+1)}=0$ for some $m\ge1$.

The following lemma is  \cite[Proposition 2.3]{CSV19}.
 \begin{lemma}
 \label{lemma:csv2.3}
 {\sl
$N^{(2)}$ is the smallest ideal of $N$ such that $N/N^{(2)}$ is a trivial skew brace.
}
\end{lemma}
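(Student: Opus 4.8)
The plan is to establish the three assertions packed into the statement: that $N^{(2)}$ is an ideal, that the quotient $N/N^{(2)}$ is a trivial skew brace, and that $N^{(2)}$ is contained in every ideal $I$ for which $N/I$ is trivial. The whole argument is a direct unwinding of the definition of $\star$ together with the fact that $\star$ passes to quotients.

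First I would recall that $N^{(2)}=N\star N$ is an ideal of $N$, this being the case $k=2$ of the assertion (cited above from \cite[Proposition~2.1]{CSV19}) that every term of the right series is an ideal. Consequently the quotient $N/N^{(2)}$ carries a well-defined skew brace structure and the canonical projection $\pi\colon N\to N/N^{(2)}$ is a homomorphism for both $+$ and $\circ$. Since the operation $\star$ is defined purely in terms of $+$ and $\circ$, namely $x\star y=-x+x\circ y-y$, it descends along any such projection, so that $\pi(x)\star\pi(y)=\pi(x\star y)$ for all $x,y\in N$. Finally, a skew brace is trivial precisely when its two operations coincide, equivalently (by \eqref{eq:*gamma}) when $a\star b=0$ for all $a,b$, equivalently when every value $\gamma_a$ of its gamma function is the identity.

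To see that $N/N^{(2)}$ is trivial, I would observe that each generator $x\star y$ of $N^{(2)}=N\star N$ lies in $N^{(2)}$ by definition, so $\pi(x\star y)=0$; combined with the compatibility $\pi(x)\star\pi(y)=\pi(x\star y)$ this yields $\pi(x)\star\pi(y)=0$ for all $x,y$, and since $\pi$ is surjective every $\star$-product in $N/N^{(2)}$ vanishes. Hence $N/N^{(2)}$ is trivial.

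For minimality, suppose $I$ is an ideal with $N/I$ trivial, and write $\rho\colon N\to N/I$ for the projection. Triviality of $N/I$ gives $\rho(x)\star\rho(y)=0$ for all $x,y$; by the same descent identity $\rho(x\star y)=\rho(x)\star\rho(y)=0$, so $x\star y\in I$ for all $x,y\in N$. As $I$ is in particular an additive subgroup and $N^{(2)}=N\star N$ is by definition the additive subgroup generated by all such $x\star y$, we conclude $N^{(2)}\subseteq I$, proving that $N^{(2)}$ is the smallest ideal with trivial quotient. The only step requiring genuine care is the descent of $\star$ to quotients, i.e.\ the identity $\pi(x\star y)=\pi(x)\star\pi(y)$, which rests on $\pi$ being simultaneously additive and multiplicative; given that, the rest is immediate.
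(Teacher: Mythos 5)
Your proof is correct: the descent identity $\pi(x\star y)=\pi(x)\star\pi(y)$ does follow from $\pi$ being a homomorphism for both $+$ and $\circ$, and together with the characterisation of triviality as $\star\equiv 0$ and the fact that $N^{(2)}=N\star N$ is the additive subgroup generated by the elements $x\star y$, it yields both the triviality of $N/N^{(2)}$ and the minimality of $N^{(2)}$. The paper itself gives no argument for this lemma --- it simply cites \cite[Proposition~2.3]{CSV19} --- and your proof is the standard one establishing that result.
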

\begin{remark}
\label{rem:s-esatta}
{\rm
In the case when $N^{(2)}$ is a trivial brace, Lemma~\ref{lemma:csv2.3}  says that
 both $(N, +)$ and $(N,\circ)$ are extensions of $N/N^{(2)}$ by $N^{(2)}$.
}
\end{remark}

\subsubsection{The left series}
All the elements of the series  $N^{k}$ are left ideals of the skew brace (see \cite[Proposition~2.2]{CSV19}), and therefore the chain  

\begin{equation}
\label{eq:filtration}
N=N^{1}\supseteq N^{2}\supseteq N^{3}\supseteq \dots,
\end{equation}
 is a filtration for both the additive and the multiplicative group of the skew brace $N$. This series is called the \emph{left series} of the skew brace.
 As we noticed before, $x\star y$ is the commutator in ${\rm Hol}(N)$ of $\gamma_x$ and $y$, therefore 
 \begin{equation*}
  N^k=   [    \underbrace{\gamma(N),     \dots,
          \gamma(N)}_{k-1}, N]
  \end{equation*}
is an iterated commutator.

A skew brace $N$ is called \emph{left nilpotent} of class $m$ if $N^{m}\supsetneq N^{m+1}=0$ for some $m\ge1$.

The characterisation of $N^k$ as an iterated commutator immediately gives the following (see also \cite[proposition~4.4]{CSV19})
\begin{prop}
\label{prop:lnilp}
{\sl
Let $p$ be a prime and $N$ be a skew brace of cardinality $p^m$. Then $N$ is left nilpotent of class $\le m$.
}
\end{prop}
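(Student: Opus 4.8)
The plan is to show that the left series $N = N^1 \supseteq N^2 \supseteq \cdots$ descends \emph{strictly} as long as it is nonzero, and then to read the bound on the class off from the order of $N$. Concretely I would establish the key inclusion
\[
N^{k+1} \subsetneq N^k \qquad \text{whenever } N^k \neq 0,
\]
and deduce that, since $|N| = p^m$, each strict step divides the order by at least $p$, forcing $N^{m+1} = 0$; this is exactly left nilpotency of class $\le m$.

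First I would record the two structural facts needed. Since $\gamma\colon (N,\circ)\to\Aut(N,+)$ is a group homomorphism and $(N,\circ)$ has order $p^m$, the image $\gamma(N)$ is a finite $p$-group. Moreover, by \cite[Proposition~2.2]{CSV19} each $N^k$ is a left ideal, hence a $\gamma(N)$-invariant subgroup of the $p$-group $(N,+)$; in particular every $\gamma_x$ restricts to an automorphism of $N^k$, and $N^k$ is itself a finite $p$-group.

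The heart of the argument --- and the step I expect to be the main obstacle --- is the strict descent. Writing $G=\gamma(N)$ and $M=N^k\neq 0$, the semidirect product $G\ltimes M$ (formed via the restricted action of $G$ on $M$) is a finite $p$-group, hence nilpotent, and $M$ is normal in it. I would then invoke the standard fact that in a finite $p$-group $K$ one has $[K,M]\subsetneq M$ for every nontrivial normal subgroup $M$: indeed, if $[K,M]=M$ then iterating yields $[\,\underbrace{K,\dots,K}_{c},M\,]=M$ for all $c\ge 1$, whereas the left-hand side lies in the $(c{+}1)$-st term of the lower central series of $K$ and so vanishes for $c$ large, a contradiction. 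Recalling from \eqref{eq:op-star} that $x\star y=[\gamma_x,y]$ is a commutator in $\Hol(N)$, the subgroup $N^{k+1}=N\star N^k=[\gamma(N),N^k]$ equals $[G,M]$, which is contained in $[G\ltimes M,M]\subsetneq M=N^k$. This yields the desired strict inclusion whenever $N^k\neq 0$.

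Finally I would assemble the count. Starting from $|N^1|=|N|=p^m$, the strict descent gives $|N^{k}|\le p^{\,m-k+1}$ for as long as the preceding term is nonzero, so $|N^{m+1}|\le p^{0}=1$ and hence $N^{m+1}=0$. The only delicate point is the strict-descent lemma above; everything else is bookkeeping with the order of a $p$-group. It is worth noting that the weaker conclusion ``$N$ is left nilpotent'' would follow more cheaply from $N^k\subseteq[\,\underbrace{H,\dots,H}_{k-1},H\,]$ with $H=\gamma(N)\ltimes N$ together with nilpotency of $H$, but obtaining the sharp bound $\le m$ (depending only on $|N|$, not on $|\gamma(N)|$) really requires controlling the descent inside $N$ itself.
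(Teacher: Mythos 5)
Your proof is correct and follows essentially the route the paper intends: the paper derives the statement ``immediately'' from the iterated-commutator characterisation $N^{k+1}=[\gamma(N),N^k]$ (citing \cite[Proposition~4.4]{CSV19}), and your strict-descent argument --- $\gamma(N)$ is a $p$-group because it is a homomorphic image of $(N,\circ)$, and a finite $p$-group $G$ acting on a nontrivial finite $p$-group $M$ satisfies $[G,M]\subsetneq M$ --- is exactly the standard fact that makes this immediate, combined with the order count $|N^{k+1}|\le |N^k|/p$. Your closing remark is also on point: merely invoking nilpotency of $\gamma(N)\ltimes N$ inside $\Hol(N)$ would give left nilpotency but not the sharp bound $\le m$, so the strict descent inside $N$ itself is indeed the essential step.
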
 

\section{Module braces}

 \label{sec:modulebraces}
 In this section we restrict our study to braces, namely to skew braces with an abelian additive group, whose additive groups have an additional structure of modules over a ring $R$. 
 \begin{definition}
 \label{def:modulebrace}
  Let $(N,+,\circ)$ be a brace with gamma function $\gamma\colon N\to {\rm Aut}(N)$.
Assume that $(N,+)$ has  a structure of  left (right) module over some ring $R$. 

We say that $(N,+,\circ)$ is a (left/right) \emph{$R$-module brace} (an $R$-brace for short) if  $\gamma(a)$ is a  $R$-module automorphism of $N$, for each $a\in N$, namely if $\gamma(N)\subseteq{\rm Aut}_R(N)$.
\end{definition}
This notion clearly generalises the classic notion of braces, which we are now calling $\Z$-braces.
This also extend the notion of $F$-brace, where $F$ is a field, given in \cite[Definition~2]{CCS19}. In fact, following the  definition given there, a brace $N$ is an  $F$-brace if
$$-rx+r(x\circ y)=-x + x\circ(r y),$$
for all $x,y\in N$ and $r\in F$,
which in terms of the gamma function is  
$$r\gamma_x(y)=\gamma_x(ry),$$
namely it is equivalent to $\gamma\in\Aut_R(N)$.
The same condition, in terms of the $\star$ operation, becomes $$r(x\star y)=x\star ry.$$
(See also \cite{Smo22a, Smo22b}).

We recall that when $R$ is commutative each left $R$-modulo is also a right $R$-modulo and viceversa.
\begin{example}
{\rm
An $R$-module $N$ with the trivial brace structure is always an $R$-brace, since in this case the corresponding gamma function is the trivial map $x\mapsto\gamma_x={\rm id}$ for all $x\in N$.
}
\end{example}
\begin{example}
\label{ex:radringRbrace}
{\rm
Let $N=(N,+ ,\cdot)$ be a radical ring and let $\circ$ the adjoint operation on $N$. 
If $(N, +)$ has a right $R$-module structure, we have that $(N,+,\circ)$ is an $R$-module brace. In fact the gamma function associated to this brace is given by $\gamma_x(y)=-x+x+y+xy=(1+x)y$, for all $x, y\in N$, so $\gamma_x\in{\rm Aut}_R(N)$ for all $x$.
}
\end{example}
\begin{example}
\label{ex:RbraceNoradring1}
{\rm
Let $R=\Z[i]$ be the ring of Gaussian integers, and let $N$ be the additive group $\Z[i]\times\Z[i]$. Consider on $N$ the operation $\circ$ defined by
\begin{equation}
\label{eq:op-es}
(\alpha_1, \beta_1)\circ (\alpha_2, \beta_2)=(\alpha_1+(-1)^{\Re(\alpha_1)}\alpha_2, \beta_1+(-1)^{\Re(\alpha_1)}\beta_2)
\end{equation}
for all $(\alpha_1, \beta_1),(\alpha_2, \beta_2)\in \Z[i]\times\Z[i]$ (here $\Re(\alpha_1)$ denotes the real part of $\alpha_1$).
We claim that $(N,+,\circ)$ is an $R$-brace. An easy way to prove this is to show that the map
$$\gamma\colon N\to \Aut_R(N)$$
defined by 
$(\alpha, \beta) \mapsto\gamma_{(\alpha, \beta)}=(-1)^{\Re(\alpha)}id$ is well defined and verifies the gamma functional equation \eqref{eq:GFE}. Clearly, $\gamma_{(\alpha, \beta)}=\pm id\in\Aut_R(N)$; moreover
\begin{equation*}
\begin{aligned}
\gamma((\alpha_1, \beta_1)+\gamma_{(\alpha_1, \beta_1)}(\alpha_2, \beta_2))=&
\gamma((\alpha_1, \beta_1)+(-1)^{\Re(\alpha_1)}(\alpha_2, \beta_2))\\
=&
\gamma((\alpha_1+(-1)^{\Re(\alpha_1)}\alpha_2, \beta_1+(-1)^{\Re(\alpha_1)} \beta_2))\\
=&(-1)^{\Re(\alpha_1)+(-1)^{\Re(\alpha_1)}\Re(\alpha_2)}\\
=&(-1)^{\Re(\alpha_1)}(-1)^{\Re(\alpha_2)}\\
=&\gamma((\alpha_1, \beta_1))\gamma((\alpha_2, \beta_2)).
\end{aligned}
\end{equation*}
Finally, it is immediate to see that the $\circ$ operation associated to $\gamma$ is \eqref{eq:op-es}.

This example has been constructed by using the general result of \cite[Proposition~2.14]{CCDC20}.
}
\end{example}

\begin{lemma}
\label{lemma:dirprod}
{\sl
Let $M$ and $N$ be $R$-braces;  then their direct product $M\times N$, with the $R$-module structure given by the direct product of $R$-modules, is an $R$-brace. 
}
\end{lemma}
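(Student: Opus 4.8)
The plan is to reduce everything to the gamma-function characterisation of $R$-braces in Definition~\ref{def:modulebrace}: a brace is an $R$-brace exactly when every value of its associated gamma function lies in $\Aut_R$ of the additive group. Thus, once we know that $M\times N$ is a brace whose additive group carries the direct-product $R$-module structure, the only thing left to check is that each value of its gamma function is $R$-linear.

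First I would observe that $M\times N$ is a brace: its additive group $(M,+)\times(N,+)$ is abelian, being a product of abelian groups, and by hypothesis it is endowed with the direct-product $R$-module structure $r\cdot(m,n)=(rm,rn)$. Next I would invoke the explicit description of the gamma function of a direct product recalled earlier (in the discussion of direct products of skew braces): writing $\gamma_M,\gamma_N$ for the gamma functions of $M$ and $N$, the gamma function $\gamma$ of $M\times N$ acts componentwise, $\gamma_{(m,n)}=((\gamma_M)_m,(\gamma_N)_n)$, so that $\gamma_{(m,n)}(m',n')=((\gamma_M)_m(m'),(\gamma_N)_n(n'))$ for all $(m',n')\in M\times N$.

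Finally I would verify $R$-linearity componentwise. Since $M$ and $N$ are $R$-braces, $(\gamma_M)_m\in\Aut_R(M)$ and $(\gamma_N)_n\in\Aut_R(N)$; hence for every $r\in R$,
\begin{equation*}
\gamma_{(m,n)}(r\cdot(m',n'))=((\gamma_M)_m(rm'),(\gamma_N)_n(rn'))=(r(\gamma_M)_m(m'),r(\gamma_N)_n(n'))=r\cdot\gamma_{(m,n)}(m',n').
\end{equation*}
As $\gamma_{(m,n)}$ is already an additive automorphism and is bijective, being a componentwise product of bijections, this shows $\gamma_{(m,n)}\in\Aut_R(M\times N)$ for every $(m,n)$, i.e.\ $\gamma(M\times N)\subseteq\Aut_R(M\times N)$, which is precisely the defining condition of an $R$-brace. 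There is no serious obstacle here: the entire content is the componentwise check of $R$-linearity displayed above, the point being that scalar multiplication on $M\times N$ is itself defined componentwise and therefore commutes with the componentwise automorphism $\gamma_{(m,n)}$.
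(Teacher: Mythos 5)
Your proof is correct and follows essentially the same route as the paper: both identify the gamma function of $M\times N$ as the componentwise map $\gamma_{(m,n)}=((\gamma_M)_m,(\gamma_N)_n)$ and deduce that its values land in $\Aut_R(M)\times\Aut_R(N)\subseteq\Aut_R(M\times N)$. Your explicit componentwise check of $R$-linearity is just an unpacking of that inclusion, which the paper leaves implicit.
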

\begin{proof}
The lemma follows by noticing that  if $\gamma$ and $\gamma'$ are the gamma functions associated to $M$ and $N$ respectively, then the $\gamma$-function  associated to $M\times N$ is the map 
$$\gamma\times\gamma'\colon M\times N\to \Aut_R(M)\times\Aut_R(N)\subseteq\Aut_R(M\times N).$$
\vspace{-\baselineskip}
\end{proof}

\smallskip
 
The following example shows that not all braces $N$ for which the additive group has also an $R$-module structure are $R$-braces. 
\begin{example}
{\rm
Let $N=\Z[i]$ considered as a $\Z[i]$-module, and let  $\gamma\colon\Z[i]\to\Aut(\Z[i])$ be the map defined by 
$$\gamma_{(a+ib)}(x,y)=((-1)^a x+{ iy}).$$
It is easy to verify that $\gamma$ is  a gamma function, so $(N,+,\circ)$ is a brace. However, $N$ is not  a $\Z[i]$-brace, since $\gamma_{(a+ib)}\not\in\Aut_{\Z[i]}(\Z[i])$ for $a$ odd.
}
\end{example}
%
%
%
%
%
%
%
%
\color{black}
The next easy lemma (which will be useful in the following) shows that the restriction of scalars preserves the module brace structure.
\begin{lemma}
\label{lemma:lifting}
{\sl
Let $S$ and $T$ be  rings and let $f\colon T\to S$ be a ring homomorphism. Then each $S$-brace $M$ has a natural  $T$-brace structure via $f$.
}
\end{lemma}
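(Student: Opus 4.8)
The plan is to construct the $T$-module structure on $M$ by \emph{restriction of scalars} along $f$, and then verify that the gamma function of the brace $M$ takes values in $\Aut_T(M)$. First I would define the $T$-action on $M$ by setting $t\cdot m = f(t)\, m$ for $t\in T$ and $m\in M$, where the right-hand side uses the given $S$-module structure on $M$. Since $f$ is a ring homomorphism, this action makes $(M,+)$ a $T$-module: the identities $(t_1+t_2)\cdot m = f(t_1+t_2)m = (f(t_1)+f(t_2))m$, $(t_1 t_2)\cdot m = f(t_1 t_2)m = f(t_1)(f(t_2)m)$, and $1_T\cdot m = f(1_T)m = m$ all follow from $f$ respecting sum, product, and unit, together with the $S$-module axioms. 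This step is routine and I would not dwell on it.

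The substantive point is that the very same brace $(M,+,\circ)$, now regarded as sitting over $T$, is again a module brace. The additive group and the gamma function $\gamma$ are unchanged; only the ring acting has changed. So it suffices to check that each $\gamma_a$, which by hypothesis lies in $\Aut_S(M)$, also lies in $\Aut_T(M)$. Each $\gamma_a$ is already an automorphism of $(M,+)$, so I only need that it is $T$-linear: for $t\in T$ and $m\in M$,
\begin{equation*}
\gamma_a(t\cdot m)=\gamma_a(f(t)\,m)=f(t)\,\gamma_a(m)=t\cdot\gamma_a(m),
\end{equation*}
where the middle equality is exactly the $S$-linearity of $\gamma_a$ guaranteed by $M$ being an $S$-brace. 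Hence $\gamma_a\in\Aut_T(M)$ for every $a$, i.e. $\gamma(M)\subseteq\Aut_T(M)$, which is precisely the condition in Definition~\ref{def:modulebrace} for $(M,+,\circ)$ to be a $T$-brace.

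There is no real obstacle here: the lemma is almost a formality, reflecting the fact that $S$-linearity implies $T$-linearity under any ring map $T\to S$. The only mild care needed is the handedness convention (left versus right modules) in Definition~\ref{def:modulebrace}; the argument is identical on either side, since $f$ intertwines the scalar multiplications and commutes past the additive automorphism $\gamma_a$ in the same way. I would state the proof compactly, noting that the $T$-module axioms follow from $f$ being a ring homomorphism and that the inclusion $\Aut_S(M)\subseteq\Aut_T(M)$ holds because any $S$-linear map is automatically $T$-linear for the restricted action.
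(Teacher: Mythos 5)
Your proof is correct and is essentially identical to the paper's: both define the $T$-action by $t\cdot m=f(t)m$ and then verify $\gamma_a(t\cdot m)=f(t)\gamma_a(m)=t\cdot\gamma_a(m)$ to conclude $\Aut_S(M)\subseteq\Aut_T(M)$. No differences worth noting.
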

\begin{proof}
It is well known that $M$ is a $T$-module via $f$. On the other hand, if $\varphi\in {\rm Aut}_S(M)$ then 
$t\cdot\varphi(m)=f(t)\varphi(m)=\varphi(f(t)m)=\varphi(t\cdot m)$ for all $t\in T$ and $m\in M$, so 
$\varphi\in {\rm Aut}_T(M)$ and $M$ is a $T$-brace.
\end{proof}

In the notation of the previous lemma, we say that $M$ is a $T$-brace \emph{by restriction of scalars} (via $f$). 
\smallskip

Most of the definitions one can give for braces have an analog for $R$-braces.
Two $R$-braces $N$ and $M$ are \emph{isomorphic} if there exists a braces isomorphism $\varphi\colon N\to M$ such that is also an $R$-module isomorphism. 

 
   An   \emph{ $R$-subbrace} of an $R$-brace $N$  is a subbrace which is also an $R$-submodule,
  This definition is justified by the following lemma.
 \begin{lemma}
 \label{lemma:Rsubb} 
{\sl
Let $N$ be an $R$-brace and let $M$ be a subbrace and an $R$-submodule of  $N$. Then $M$ with the induced operations is  an $R$-brace.
}
  \end{lemma}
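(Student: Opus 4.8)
The plan is to verify directly that the restriction of the gamma function of $N$ to $M$ takes values in $\Aut_R(M)$, which is exactly what Definition~\ref{def:modulebrace} requires. Since we are given that $M$ is a subbrace of $N$ and simultaneously an $R$-submodule, the two ingredients of an $R$-brace structure are already present on $M$; the only thing to check is their compatibility, namely that each $\gamma_x|_M$ is not merely an additive automorphism of $M$ but an $R$-module automorphism.

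First I would recall, as noted in the excerpt, that the gamma function of a subbrace is the restriction of the gamma function of the ambient brace; so for $x\in M$ the relevant map is $\gamma_x|_M$, where $\gamma$ is the gamma function of $N$. Since $M$ is a subgroup of $(N,+)$ and $\gamma_x\in\Aut(N,+)$ restricts to an automorphism of the subgroup $M$ (this is part of $M$ being a subbrace, or follows from $\gamma_x$ being an additive automorphism carrying $M$ to $M$), we already have $\gamma_x|_M\in\Aut(M,+)$. The key step is then the $R$-linearity: for every $r\in R$ and every $m\in M$ we have
\begin{equation*}
\gamma_x|_M(r\cdot m)=\gamma_x(r\cdot m)=r\cdot\gamma_x(m)=r\cdot\gamma_x|_M(m),
\end{equation*}
where the middle equality holds because $\gamma_x\in\Aut_R(N)$ by the hypothesis that $N$ is an $R$-brace, and the outer equalities simply record that the $R$-action on $M$ is the restriction of that on $N$ (which is precisely the meaning of $M$ being an $R$-submodule). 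Hence $\gamma_x|_M\in\Aut_R(M)$ for each $x\in M$.

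Finally I would assemble the conclusion: $(M,+,\circ)$ is a brace (being a subbrace of $N$), its additive group $(M,+)$ is an $R$-module (being an $R$-submodule of $N$), and we have just shown that the associated gamma function takes values in $\Aut_R(M)$, so $M$ satisfies Definition~\ref{def:modulebrace} and is an $R$-brace. I do not anticipate a genuine obstacle here: the statement is essentially a bookkeeping lemma, and the only point requiring care is being explicit that the $R$-module structure and the $\circ$-operation on $M$ are the restrictions of those on $N$, so that $R$-linearity of $\gamma_x$ on $N$ transfers verbatim to $M$. The proof is a one-line computation together with this compatibility remark.
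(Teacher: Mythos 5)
Your proof is correct and follows essentially the same route as the paper: both identify the gamma function of $M$ as the restriction $(\gamma_m)_{|M}$ of the gamma function of $N$ and then observe that the restriction of an $R$-module automorphism to an $R$-submodule is again an $R$-module automorphism. Your explicit computation $\gamma_x(r\cdot m)=r\cdot\gamma_x(m)$ just spells out the step the paper states in one line.
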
 
\begin{proof}
A subbrace is a brace with the induced operations. 
So, if $\gamma\colon N\to\Aut_R(N)$ is the  gamma function associated to the $R$-brace $N$, then the gamma function associated to the brace $M$ is the restriction of $\gamma$ to $M$, more precisely the map 
$$\gamma_M\colon M\to \Aut(M)$$
sending $m\mapsto(\gamma_m)_{|M}$:  (this map is well defined since $M$ is a subbrace of $N$, so $\gamma_m(m')=-m+m\circ m'\in M$ for all $m,m'\in M$).
On the other hand,  since  $\gamma_m$ is an $R$-module automorphism for all $m$, then also its restriction to the $R$-submodule  $M$  is such,  so 
$(\gamma_m)_{|M}\in \Aut_R(M)$. 
\end{proof}

   A left ideal $I$ of an $R$-brace $N$ is called  \emph{left $R$-ideal} if $I$ is also an $R$-submodule of $N$. Equivalently, a left $R$-ideal of $N$ is an $R$-subbrace of $N$ which is $\gamma(N)$-invariant  (in \cite{CCS15} an analogous definition is given for the case when $R$ is a  field). 
In the same way, an  \emph{$R$-ideal} of an $R$-brace $N$ is an ideal of $N$ which is also an $R$-submodule of $N$.

\begin{lemma}
\label{lemma:quotient}
{\sl 
Let  $(N, +,\circ)$ be an $R$-brace, and let $I$ be an $R$-ideal of $N$. Then the quotient brace $N/I$ is an $R$-brace.
}
\end{lemma}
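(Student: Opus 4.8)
The plan is to put on the quotient brace $N/I$ the obvious quotient $R$-module structure and then verify that the associated gamma function takes its values in $\Aut_R(N/I)$. First I would record the two structures that come for free. Since an $R$-ideal is in particular an ideal of the skew brace $N$, the material recalled before Lemma~\ref{lemma:iso} already tells us that $N/I$ is a skew brace for the reduced operations $\overline{x}+\overline{y}=\overline{x+y}$ and $\overline{x}\circ\overline{y}=\overline{x\circ y}$; as $(N,+)$ is abelian so is $(N/I,+)$, hence $N/I$ is a brace. On the other hand, $I$ is by hypothesis an $R$-submodule of $N$, so the additive group $N/I$ carries the quotient $R$-module structure $r\,\overline{y}=\overline{ry}$, which is well defined precisely because $rI\subseteq I$.

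The heart of the argument is the identification and the $R$-linearity of the gamma function of $N/I$. By the definition of the gamma function from the brace operations, for $\overline{x},\overline{y}\in N/I$ one has
\begin{equation*}
\gamma^{N/I}_{\overline{x}}(\overline{y})=-\overline{x}+\overline{x}\circ\overline{y}=\overline{-x+x\circ y}=\overline{\gamma_x(y)},
\end{equation*}
so the gamma function of the quotient is simply the reduction of $\gamma$; this map is well defined since $I$ is $\gamma(N)$-invariant, i.e. $\gamma_x(I)\subseteq I$. It then remains to check $R$-linearity. Using that the $R$-action commutes with the canonical projection and that $\gamma_x\in\Aut_R(N)$, I would compute
\begin{equation*}
\gamma^{N/I}_{\overline{x}}(r\,\overline{y})=\overline{\gamma_x(ry)}=\overline{r\,\gamma_x(y)}=r\,\overline{\gamma_x(y)}=r\,\gamma^{N/I}_{\overline{x}}(\overline{y}),
\end{equation*}
so $\gamma^{N/I}_{\overline{x}}$ is an $R$-linear bijection, hence an element of $\Aut_R(N/I)$. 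Therefore $\gamma(N/I)\subseteq\Aut_R(N/I)$ and $N/I$ is an $R$-brace.

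I do not expect a genuine obstacle here: the only points requiring care are the well-definedness statements, and each is forced by a single containment — $rI\subseteq I$ for the module structure, and $\gamma_x(I)\subseteq I$ (the $\gamma(N)$-invariance of the ideal) for the reduced gamma function. Once these are in place, the $R$-linearity of each $\gamma_x$ passes verbatim to the quotient, since the $R$-action on $N/I$ is defined through the projection.
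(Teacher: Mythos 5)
Your proof is correct and follows essentially the same route as the paper's: both take for granted that $N/I$ is a brace and a quotient $R$-module, identify the gamma function of the quotient as the reduction of $\gamma$, and deduce its $R$-linearity from $\gamma_x\in\Aut_R(N)$. You simply spell out the well-definedness and linearity computations that the paper leaves implicit.
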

\begin{proof}
We already know that $N/I$ is a brace and an $R$-module. Now, if $\gamma$ is the gamma function associated to $N$, then the gamma function $\bar\gamma$ associated to the quotient structure on $N/I$ is 
the map defined by 
$\bar\gamma_{x+I}=\pi\gamma_x$, where $\pi\colon N\to N/I$ denotes the projection, so it is an $R$-module isomorphism.
\end{proof}

%
%
%
%

\begin{lemma}
\label{lemma:*op}
{\sl
Let $(N,+,\circ)$ be an $R$-brace. Then for each $k\ge1$ the following hold.
\begin{enumerate}[label=(\roman*)]
 \item $N^{(k)}$ is an $R$-ideal of $N;$
 \item $N^k$ is a left $R$-ideal of $N$;
 \item $N_{(k)}$ is an $R$-ideal of $N_{(k-1)}$ and an $R$-subbrace of $N$.
 \end{enumerate}
 }
\end{lemma}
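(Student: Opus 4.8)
The plan is to separate the purely skew-brace content of the three assertions from the new, $R$-linear content. The facts that $N^{(k)}$ is an ideal of $N$, that $N^{k}$ is a left ideal of $N$, and that $N_{(k)}$ is an ideal of $N_{(k-1)}$ and a subbrace of $N$ are statements about the skew brace alone: the first two are recorded just above with reference to \cite{CSV19}, and the third belongs to the same structure theory of the series $N_{(k)}$ in \cite{CSV19}. Hence in each item it remains only to check closure under the $R$-action, i.e.\ that the subgroup in question is an $R$-submodule of $N$; combined with the skew-brace statements this gives the $R$-ideal, left $R$-ideal, and $R$-subbrace claims respectively.

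The one computation I would isolate first is the identity
\[
r(x\star y)=x\star(ry),\qquad r\in R,\ x,y\in N,
\]
which holds because $x\star y=\gamma_x(y)-y$ with $\gamma_x\in\Aut_R(N)$ being $R$-linear, so $r(x\star y)=r\gamma_x(y)-ry=\gamma_x(ry)-ry=x\star(ry)$. This is exactly the reformulation of the $R$-brace condition recorded after Definition~\ref{def:modulebrace}. Since, by definition, $X\star Y$ is the additive subgroup generated by the products $x\star y$, and the $R$-action distributes over finite sums, this identity lets me push any scalar $r$ into the second factor of a $\star$-product and thereby test membership.

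For item (i) no induction is needed: $N^{(k)}=N^{(k-1)}\star N$ is generated by elements $x\star y$ with $y$ ranging over all of $N$, so $r(x\star y)=x\star(ry)$ is again such a generator; hence $r\cdot N^{(k)}\subseteq N^{(k)}$. For (ii) and (iii) I would induct on $k$, because the second $\star$-factor now ranges over a proper subset. In (ii), with $N^{1}=N$ as base case and $N^{k-1}$ an $R$-submodule by the inductive hypothesis, every generator $x\star y$ of $N^{k}=N\star N^{k-1}$ satisfies $ry\in N^{k-1}$, so $r(x\star y)=x\star(ry)\in N^{k}$. In (iii), with $N_{(1)}=N$ as base case and $N_{(k-1)}$ an $R$-subbrace by the inductive hypothesis (so that it is itself an $R$-brace by Lemma~\ref{lemma:Rsubb}, making the phrase ``$R$-ideal of $N_{(k-1)}$'' meaningful), both factors of $N_{(k)}=N_{(k-1)}\star N_{(k-1)}$ lie in $N_{(k-1)}$ and $ry\in N_{(k-1)}$, so $r(x\star y)=x\star(ry)\in N_{(k)}$.

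I do not expect a serious obstacle: the entire content is the $R$-linearity identity plus the bookkeeping of which submodule the scalar-shifted element lands in. The only points needing care are setting up the inductions in (ii) and (iii) so that $ry$ is guaranteed to lie in the correct submodule, and noting that ``$R$-ideal of $N_{(k-1)}$'' is well-posed only once $N_{(k-1)}$ is known to be an $R$-brace, which is precisely what the inductive $R$-subbrace claim supplies through Lemma~\ref{lemma:Rsubb}.
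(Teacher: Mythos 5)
Your proposal is correct and follows essentially the same route as the paper: the whole content is the identity $r(x\star y)=x\star(ry)$ coming from $\gamma_x\in\Aut_R(N)$, applied to the generators of each term of the series, with the brace-theoretic parts quoted from \cite{CSV19} and an induction for (ii) and (iii). Your treatment of (iii) is slightly more explicit than the paper's (which simply invokes part (i) for the brace $N_{(k-1)}$), but the argument is the same.
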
 
 \begin{proof}
(i)  By  \cite[Proposition~2.1]{CSV19},  $N^{(k)}$is  an ideal of the brace $N$;  moreover  $N^{(k)}$ is closed under multiplication by elements of $R$ since, for $x\in N^{(k-1)}$, $y\in N$ and $r\in R$,
$$r(x\star y)=r(\gamma_x(y)-y)=r\gamma_x(y)-rx=\gamma_x(ry)-ry=x\star ry\in N^{(k)}$$
being $\gamma_x$ an $R$-module morphism. 

(ii)  By  \cite[Proposition~2.1]{
CSV19}  $N^{k}$ a left ideal of the brace $N$ and the equality  
$r(x\star y)=x\star ry$ together with an inductive argument shows that 
$N^k$ is an $R$-module.

(iii)  $N_{(k)}=N_{(k-1)}\star N_{(k-1)}$, so it is an $R$-ideal of $N_{(k-1)}$ by part (i), and therefore it is an $R$-subbrace of $N$. 
 \end{proof}
%
%
%
%
%
%
%
%

 \color{black}
  \subsection{Splitting of module braces}

  The following proposition can be seen as a generalisation of \cite[Theorem~1]{Byo13}. 
  \begin{prop}
\label{prop:decomposition}
{\sl
Let $R$ be a commutative ring, with unit $1$, which is the direct sum of its subrings $R_i$ (with unit $e_i$)
$$ R=\bigoplus_{i=1}^t R_i.$$
Let $(N,+)$ be an abelian group which is also an $R$-module. Then $N$ can be written in the form
\begin{equation}
\label{eq:N=}
N=\bigoplus_{i=1}^t N_i,
\end{equation}
where $N_i=e_iN$ is an $R$-module, which is annihilated by $R_j$ for all $j\ne i$.

If, in addition,  $(N,+)$ has also a  structure $(N,+,\circ)$ of  $R$-brace, 
then, for each $i$, $N_i$ is a left $R$-ideal of $N$. 

Moreover, all the $N_i$'s are normal in $(N, \circ)$, or, equivalently, are ideals of $N$, if and only if
 the decomposition in \eqref{eq:N=} is an $R$-braces decomposition.


}
 
\end{prop}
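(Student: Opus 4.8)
The plan is to split the statement into a purely module-theoretic part and two brace-theoretic parts, reducing the latter to Proposition~\ref{prop:dirsum}.

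First I would record the idempotent structure coming from the ring decomposition. Writing $1=\sum_{i=1}^t e_i$, the hypothesis $R=\bigoplus_i R_i$ forces the $e_i$ to be orthogonal idempotents ($e_ie_j=\delta_{ij}e_i$), which are central since $R$ is commutative, and moreover $R_iR_j=0$ for $i\ne j$. Applying $1=\sum_i e_i$ to an arbitrary $n\in N$ gives $n=\sum_i e_i n$, so $N=\sum_i N_i$ with $N_i=e_iN$; each $N_i$ is an $R$-submodule because $r(e_in)=e_i(rn)$ by commutativity. Directness follows by applying $e_j$ to a relation $\sum_i x_i=0$ with $x_i\in N_i$: since $e_j$ acts as the identity on $N_j$ and kills $N_i$ for $i\ne j$, one gets $x_j=0$. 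The annihilation property is immediate: for $r_j\in R_j$ and $i\ne j$ we have $r_j(e_in)=(r_je_i)n=0$ because $R_jR_i=0$. This settles \eqref{eq:N=}.

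Next, assuming the $R$-brace structure, I would prove that each $N_i$ is a left $R$-ideal. The key observation is that, by the very definition of an $R$-brace, every $\gamma_x$ lies in $\Aut_R(N)$ and hence commutes with multiplication by the central element $e_i$. Therefore $\gamma_x(N_i)=\gamma_x(e_iN)=e_i\gamma_x(N)\subseteq e_iN=N_i$, so $N_i$ is $\gamma(N)$-invariant. Being also an $R$-submodule (and hence an additive subgroup), $N_i$ is then a left ideal by the definition recalled in Section~\ref{sec:tools}, and therefore a left $R$-ideal; in particular, by Lemma~\ref{lemma:Rsubb}, each $N_i$ is itself an $R$-brace, so the direct product $N_1\times\cdots\times N_t$ makes sense as an $R$-brace via Lemma~\ref{lemma:dirprod}.

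Finally I would establish the equivalence by invoking Proposition~\ref{prop:dirsum}. The map $f\colon N\to N_1\times\cdots\times N_t$, $\sum_j x_j\mapsto(x_1,\dots,x_t)$, is automatically an isomorphism of $R$-modules, precisely because \eqref{eq:N=} is a decomposition into $R$-submodules; hence $f$ is an $R$-brace isomorphism if and only if it is a brace isomorphism. By the equivalence $(i)\Leftrightarrow(ii)$ of Proposition~\ref{prop:dirsum}, the latter holds if and only if all the $N_i$ are ideals of $N$. It remains only to note the parenthetical equivalence between ``normal in $(N,\circ)$'' and ``ideal'': each $N_i$ is already $\gamma(N)$-invariant and, the additive group being abelian, automatically normal in $(N,+)$, so the sole extra requirement for $N_i$ to be an ideal is normality in $(N,\circ)$. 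I expect the main (though modest) point to be the passage from a brace decomposition to an $R$-brace decomposition; this is handled cleanly by the remark that $f$ respects the $R$-structure for free, which lets Proposition~\ref{prop:dirsum} carry the entire burden of the equivalence.
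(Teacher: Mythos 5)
Your proposal is correct and follows essentially the same route as the paper: decompose $N$ via the orthogonal idempotents $e_i$, observe that $e_iN$ is invariant under $\Aut_R(N)$ (hence under every $\gamma_x$) to get the left $R$-ideal claim, and then invoke Proposition~\ref{prop:dirsum} for the final equivalence. The only difference is that you spell out a few steps the paper leaves implicit (the $\Aut_R$-invariance computation $\gamma_x(e_iN)=e_i\gamma_x(N)$ and the remark that $f$ respects the $R$-structure for free), which is fine.
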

\begin{proof}
%

The elements $e_1,\dots, e_t$ are orthogonal idempotents of $R$ and $e_1+\dots +e_t=1$,
so any $x\in N$ can be written as $x=1x=e_1x+\dots+ e_tx$.  Further, the decomposition of $x$ into a sum of elements of the $N_i$'s is unique since 
if $x=e_1y_1+\dots+ e_ty_t$, by multiplying by $e_i$ we get $e_ix=e_i( e_1y_1+\dots+ e_ty_t)=e_iy_i$. This proves the equality in  \eqref{eq:N=}.

Moreover, for all $r_j\in R_j$ and  $x\in N$ we have $r_j e_i x= r_je_je_i x=0$, so $N_i$ is  annihilated by $R_j$ for all $j\ne i$.

We also notice that  
 $N_i=e_iN$  is invariant under the action of $\Aut_R(N)$, for all $i$.  Therefore, if  $(N,+)$ has also a  structure $(N,+,\circ)$ of  $R$-brace, then all $N_i$'s are left ideals of $N$, and the last part of the statement follows from Proposition~\ref{prop:dirsum}.
\end{proof}
In view of Example~\ref{ex:radringRbrace}, as a particular case of the previous proposition we get the following.

\begin{corollary}
\label{cor:brace-dec}
{\sl
Let $R=\bigoplus_{i=1}^tR_i$ be a commutative ring with identity.
Let $(N,+,\cdot)$ be a radical ring, and let $(N,+,\circ)$ be the associated brace.
If  $(N,+)$ has also an $R$- module structure, then, 
with the notation of Proposition~\ref{prop:decomposition},
 we have 
\begin{equation}
\label{eq:N=radring}
N=\bigoplus_{i=1}^t N_i
\end{equation}
 as $R$-braces, namely 
$$1+N= \bigoplus_{i=1}^t(1+Ne_i).$$
}
\end{corollary}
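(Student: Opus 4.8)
The plan is to reduce Corollary~\ref{cor:brace-dec} entirely to Proposition~\ref{prop:decomposition} together with Example~\ref{ex:radringRbrace}, verifying only that the hypotheses needed to conclude that \eqref{eq:N=radring} is a decomposition of $R$-braces are automatically satisfied in the radical-ring setting. First I would invoke Proposition~\ref{prop:decomposition}: since $R=\bigoplus_{i=1}^t R_i$ is commutative with identity and $(N,+)$ is an $R$-module, we obtain $N=\bigoplus_{i=1}^t N_i$ with $N_i=e_iN$ annihilated by $R_j$ for $j\ne i$, and each $N_i$ is a left $R$-ideal of the brace $(N,+,\circ)$. What remains is to upgrade ``left $R$-ideal'' to ``$R$-ideal'', i.e.\ to show that each $N_i$ is normal in $(N,\circ)$, which by the last clause of Proposition~\ref{prop:decomposition} is equivalent to the decomposition being one of $R$-braces.

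The key observation is that for a brace coming from a radical ring the multiplicative structure is very explicit. By Example~\ref{ex:radringRbrace} the gamma function is $\gamma_x(y)=(1+x)y$, and the adjoint operation satisfies $x\circ y=x+(1+x)y=x+y+xy$. I would then compute directly that each $N_i=e_iN$ is closed under $\circ$ and stable under conjugation: writing a general element as $\sum_j x_j$ with $x_j\in N_j$, and using that $N_i$ is an ideal of the ring $N$ (because $e_i$ is a central idempotent, so $e_iN$ is a two-sided ideal and in fact $N_iN_j=0$ for $i\ne j$ as $N_iN_j\subseteq e_iN\cap e_jN=0$), the products $x_ix_j$ and $x_jx_i$ land in $N_i$ when one factor is in $N_i$. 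This orthogonality of the ideals $N_i$ is exactly what forces $\gamma_x(y)=\sum_j\gamma_{x_j}(y_j)$: indeed $\gamma_x(y)=(1+\sum_j x_j)(\sum_k y_k)=\sum_k y_k+\sum_{j,k}x_jy_k=\sum_j\big(y_j+x_jy_j\big)=\sum_j(1+x_j)y_j=\sum_j\gamma_{x_j}(y_j)$, using $x_jy_k=0$ for $j\ne k$. This is condition~(iii) of Proposition~\ref{prop:dirsum}, whence the $N_i$ are ideals and $N\cong\prod_i N_i$ as skew braces.

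Having established the decomposition as $R$-braces $N=\bigoplus_i N_i$, the final displayed identity is just a translation through the isomorphism $1+x\mapsto x$ between $(1+N,\cdot)$ and $(N,\circ)$ recorded in the excerpt: the multiplicative group $(N_i,\circ)$ corresponds to the subgroup $1+Ne_i=1+N_i$ of units of the form $1+x$ with $x\in N_i$, and the direct-product decomposition of $(N,\circ)$ transports to the internal direct product $1+N=\bigoplus_i(1+Ne_i)$. I would close by remarking that each factor $N_i$ is itself a radical ring (being a ring direct summand) carrying the induced $R_i$-module, hence an $R$-brace by Example~\ref{ex:radringRbrace}, so the decomposition is genuinely a splitting into $R$-braces and not merely an additive splitting.

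The only real point requiring care---the ``main obstacle,'' though it is mild---is the verification that the multiplicative normality holds, i.e.\ that the left $R$-ideals $N_i$ furnished by Proposition~\ref{prop:decomposition} are actually ideals. The clean way to see this is precisely the orthogonality $N_iN_j=0$ for $i\ne j$ coming from the central idempotents $e_i$, which collapses the cross terms in the ring multiplication and yields criterion~(iii) of Proposition~\ref{prop:dirsum} without further effort; everything else is a routine transcription between the radical-ring and brace languages already set up in the excerpt.
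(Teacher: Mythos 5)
Your proposal is correct and follows essentially the same route as the paper: reduce to Proposition~\ref{prop:decomposition}, then verify the splitting criterion of Proposition~\ref{prop:dirsum} by using the orthogonality of the idempotents $e_i$ to kill the cross terms in the ring product. The paper checks condition (iv) ($x\star y=\sum_i e_ix\cdot e_iy$) where you check the equivalent condition (iii) for the gamma function, and you additionally spell out the translation to $1+N=\bigoplus_i(1+Ne_i)$, which the paper leaves implicit.
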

\begin{proof}
In Example~\ref{ex:radringRbrace} we showed that in this case  the brace associated to the radical ring $N$ is an $R$-brace, so,
by Proposition~\ref{prop:decomposition},   the equality in \eqref{eq:N=radring} is a brace decomposition if and only if the left ideals $N_1, \dots, N_t$ are ideals of $N$. 

In view of Proposition~\ref{prop:dirsum}, this is equivalent to showing that 
$$x \cdot y=\sum_{i=1}^te_ix\cdot e_iy,$$  
for all $x,y\in N$ (recall that the $\star$ operation of a radical ring is just the product of the ring), and this easily follows from the orthogonality of the $e_i$'s.
\end{proof}

%

%

\begin{remark}
\label{rem:decomposition}
{\rm
If $R=\Z$ and $(N, +,\circ)$ is a finite brace, then the action of $\Z$ on $N$ can not be faithful, so  $N$ is a $\Z/d\Z$-module for some $d$.

Let  $d=p_1^{a_1}\dots p_t^{a_t}$, where the $p_i$'s are pairwise distinct primes, then 
$$\Z/d\Z\cong\bigoplus_{i=1}^t\Z/p_i^{a_i}\Z$$
and, with the notation of Proposition~\ref{prop:decomposition},  $N_i$ is the Sylow $p_i$-subgroup of $(N,+)$, and it is also a Sylow $p_i$-subgroup of $(N,\circ)$. Therefore, if $(N,\circ)$ is nilpotent we recover the core of \cite[Theorem~1]{Byo13} (see also \cite[Corollary~4.3]{CSV19} and \cite[Subs. 2.4]{CCDC20}).
}
\end{remark}

 \section{R-braces of small rank}
\label{sec:smallrank}

In this section we present our result on the relations between the additive and the multiplicative groups of a $R$-brace. In  \cite[Theorem~1]{FCC} and its generalisation  in \cite[Theorem~2.5]{Bac16}, these relations are studied for ($\Z$-)braces. Building upon these results, in Theorem~\ref{teo:isomorphic}, we prove that  in the case $D$-braces, where $D$ is a  principal ideal domain with some additional assumptions, the connections between 
the additive and the multiplicative groups can be much tighter, depending on D.

(In a different direction  \cite[Theorem 2.5]{Bac16} has been recently generalised in \cite{CDC22}.)

%

Theorem 4.1 gives results for module braces on rings of a particular type. In the remainder of the section, after introducing some basic on $p$-adic fields, we show that this theorem can be applied to module braces on certain $p$-adic rings; then, in Lemma~\ref{lemma:rango} and Corollary~\ref{cor:rank}, we quantify the advantage that in this case one can obtain by applying Theorem 4.1 instead of \cite[Theorem~2.5]{Bac16}. Next, we show, with an argument traceable to Hensel's Lemma, how Theorem 4.1 can be applied to modules on local rings (see Lemma~\ref{lemma:padic}). Finally, we show how to apply the results obtained in these cases to get information on $R$-braces, with R any commutative ring.


\smallskip

Let $D$ be a principal ideal domain (PID), and let $M$ be a  torsion $D$-module. $M$ has a unique decomposition as a sum of 
\emph{indecomposable} cyclic $D$-modules. We call  \emph{$D$-rank} of $M$   the number of cyclic factors of this decomposition and denote it as ${\rm rank}_DM$. 

 \begin{theo}
\label{teo:isomorphic} 
{\sl
Let $p$ be a prime number, and let $D$ be a PID such that $p$ is a prime in $D$.
Let $(N,+,\circ)$ be a $D$-brace of order a power of $p$.

Assume that $r=\rm{rank}_D N<p-1$. 

Then $(N,+)$ and $(N,\circ)$ have the same number of element of each order.
In particular, if $(N,\circ)$ is abelian, then $(N,+)\cong (N,\circ)$.
}
\end{theo}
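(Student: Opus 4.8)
The plan is to reduce the comparison of $(N,+)$ and $(N,\circ)$ to a filtration argument driven by the right series $N^{(k)}$, exploiting the $D$-module structure together with the smallness hypothesis $r=\operatorname{rank}_D N<p-1$. The guiding principle is the one behind \cite[Theorem~2.5]{Bac16}: the obstruction to $(N,+)$ and $(N,\circ)$ having the same order distribution is governed by the operation $\star$, and when the relevant rank is small this obstruction is forced to vanish on each quotient of a suitable central filtration.

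\emph{First steps.} I would begin by recording the structural consequences of the hypotheses. Since $p$ is prime in $D$ and $N$ has $p$-power order, $N$ is a torsion $D$-module whose decomposition into indecomposable cyclic $D$-modules has exactly $r<p-1$ factors; each such factor is a cyclic $D/p^{a}$-module for suitable exponents, and the relevant automorphism group $\Aut_D(N)$ embeds into a group of $r\times r$ matrices over the local ring $D/(p^{N})$ (for $N$ the largest exponent). The key numerical input is that $\gamma(N)\subseteq\Aut_D(N)$ by the very definition of a $D$-brace, so every value of the gamma function is simultaneously a module automorphism; this is what lets the small-rank hypothesis bite. By Proposition~\ref{prop:lnilp}, $N$ is left nilpotent, and by Lemma~\ref{lemma:*op} the right series $N=N^{(1)}\supseteq N^{(2)}\supseteq\cdots$ consists of $R$-ideals, so each quotient $N^{(k)}/N^{(k+1)}$ is again a $D$-module and, by Lemma~\ref{lemma:csv2.3} and the remarks on the right series, carries the trivial brace structure.

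\emph{Main argument.} The plan is to show, by induction along the right series, that passing from $+$ to $\circ$ preserves the order of each element. Concretely, one analyses the map $x\mapsto\gamma_x$ as a homomorphism $(N,\circ)\to\Aut_D(N)$ and uses $x\circ y=x+\gamma_x(y)$ together with $x\star y=\gamma_x(y)-y$ to expand $\circ$-powers: the $n$-th $\circ$-power of $x$ differs from $nx$ by a sum of iterated $\star$-terms living in successively deeper pieces of the filtration. Because the additive group is an abelian $p$-group, the order of $x$ under $+$ is determined by the least power of $p$ annihilating it; the claim is that the correction terms, which lie in $N^{(2)}$ and deeper, cannot change this $p$-adic order when the rank is $<p-1$. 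The small-rank hypothesis enters through a binomial/commutator-collection identity: expanding $x^{\circ p^{j}}$ produces binomial coefficients $\binom{p^{j}}{i}$, and the terms that could shift the order involve products of fewer than $p-1$ commutators; here $r<p-1$ forces the top iterated $\star$-products to vanish (an iterated commutator of length $\ge p-1$ in a nilpotent object of rank $r<p-1$ is zero), so the $\circ$-order and $+$-order of every element agree. Summing over elements gives equality of the order distributions, and when $(N,\circ)$ is abelian two finite abelian $p$-groups with the same order distribution are isomorphic, yielding $(N,+)\cong(N,\circ)$.

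\emph{Main obstacle.} The hard part will be making the collection/expansion of $x^{\circ p^{j}}$ precise and proving that the rank bound $r<p-1$ exactly kills the order-changing correction terms. The delicate point is the interplay between the $D$-linearity of each $\gamma_x$ and the binomial coefficients $\binom{p^{j}}{i}$: one must track which iterated $\star$-products survive modulo $p$ and show that survival requires an iterated commutator of length at least $p-1$, which the rank hypothesis prohibits. I expect this to require a careful bookkeeping lemma—essentially a $D$-linear, $\star$-based analogue of the Lazard/Hall collection process—adapted so that the ambient ring $D$ (with $p$ prime in it) behaves like $\Z_{(p)}$ for the purpose of counting $p$-adic valuations; verifying that the $D$-module refinement genuinely lowers the effective rank (as anticipated in Lemma~\ref{lemma:rango}) is where the real content lies.
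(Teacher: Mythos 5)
Your overall target is right, but the central mechanism you propose does not work, and the gap is exactly where the real difficulty of the theorem lies. You claim that ``an iterated commutator of length $\ge p-1$ in a nilpotent object of rank $r<p-1$ is zero''. This is only true when $N$ is annihilated by $p$, i.e.\ when $N$ is a vector space over the residue field $K=D/pD$; that special case is the paper's Lemma~\ref{lemma:dimension} and Proposition~\ref{prop:4}, and it settles only the elementary abelian situation. For a general $D$-module of rank $r$ the correct statement (the paper's Lemma~\ref{lemma:2.7}, generalising Bachiller) is much weaker: for $f\in\Aut_D(N)$ of $p$-power order one only gets $(f-{\rm id})^{r}(N)\subseteq pN$. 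So in the expansion
$p_\circ a=\bigl(p+\binom{p}{2}\delta_a+\dots+\binom{p}{p-1}\delta_a^{p-2}\bigr)(a)+\delta_a^{p-1}(a)$, with $\delta_a=\gamma_a-{\rm id}$,
the long iterated $\star$-terms do not vanish; they merely become more divisible by $p$. Consequently your argument establishes at best that the $\circ$-order of $a$ is bounded above in terms of its $+$-order, but not the reverse inequality: you must also rule out that the correction terms conspire with $pa$ to \emph{lower} the order, i.e.\ that $p_\circ a$ lands deeper in the filtration than $pa$ does. Your proposal contains no mechanism for this lower bound.

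The paper's proof handles precisely this point, and not via the right series $N^{(k)}$ (whose trivial quotients do not control element orders) but via the filtration by $\Omega_i(N,+)$, the elements of additive order dividing $p^i$, which are left $D$-ideals. The inclusion $\Omega_i(N,+)\subseteq\Omega_i(N,\circ)$ is the ``easy'' half along the lines you sketch (base case from Proposition~\ref{prop:4}, inductive step from Lemma~\ref{lemma:2.7}). The hard half — showing that $a\notin\Omega_i(N,+)$ forces $p_\circ a\notin\Omega_{i-1}(N,+)$ — is proved by introducing, inside $\Omega_{i+1}(N,+)/\Omega_{i-1}(N,+)$, the chain of $D$-submodules $S_k$ generated by the classes of $\delta_a^{j}(a)$ for $j\ge k-1$, showing that $S/S_k$ is a $K$-space with basis the classes of $\delta_a^{j}(a)$, $0\le j\le k-2$, and deducing from $r<p-1$ that $\delta_a^{p-1}(a)$ lies strictly deeper than $p\bar a$, whence $p_\circ a\equiv p a\not\equiv 0$. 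Note also that the relevant nilpotency data is carried by the left series (since $\delta_a^{j}(a)\in N^{j+1}$), not the right series you invoke. Without an argument replacing the $S_k$ analysis, your plan proves only one of the two containments needed, and the counting step at the end cannot be completed.
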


Sometimes we will say generically that a $D$-brace $N$ as in the theorem has \emph{``small''  $D$-rank}.

\begin{remark}
{\rm \cite[Theorem 2.5]{Bac16}  gives the same result of Theorem~\ref{teo:isomorphic} for  $D=\Z$. Further examples of rings $D$ as in the theorem are given by the ring of integers of unramified extensions of the field of the $p$-adic numbers $\Q_p$.
}
\end{remark}

Theorem~\ref{teo:isomorphic} will be proved in Section~\ref{sec:proof}. 
The next corollary makes it explicit for radical rings with a $D$-algebra structure.

\begin{corollary}
\label{cor:radring}
 Let $(N,+,\cdot)$ be a nilpotent commutative ring of order a power of a prime $p$. Let $D$ be a PID in which $p$ is prime. If  $N$ is a $D$-module of ${\rm rank}_DN<p-1$, then $1+N\cong N.$
\end{corollary}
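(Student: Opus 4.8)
The plan is to deduce Corollary~\ref{cor:radring} directly from Theorem~\ref{teo:isomorphic} together with Example~\ref{ex:radringRbrace}, since the corollary is essentially the specialisation of the theorem to the brace arising from a nilpotent commutative ring. First I would recall that a nilpotent commutative ring $(N,+,\cdot)$ of order $p^k$ is in particular a radical ring: nilpotency guarantees that $1+N$ forms a group under the multiplication $x\circ y = x+y+xy$, so the adjoint operation $\circ$ is well defined and $(N,+,\circ)$ is a (two-sided) brace. The map $1+x\mapsto x$ identifies the adjoint group $(N,\circ)$ with the group of units $(1+N,\cdot)$, as noted in the excerpt right after the radical-ring discussion.

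Next I would invoke the hypothesis that $(N,+)$ carries a $D$-module structure, where $D$ is a PID in which $p$ is prime, with $\mathrm{rank}_D N < p-1$. By Example~\ref{ex:radringRbrace}, the brace associated to a radical ring whose additive group is an $R$-module is automatically an $R$-brace, because its gamma function is $\gamma_x(y)=(1+x)y$, which is multiplication by the fixed element $1+x$ and hence commutes with the $D$-action: each $\gamma_x$ lies in $\mathrm{Aut}_D(N)$. Thus $(N,+,\circ)$ is a $D$-brace of order a power of $p$ satisfying all the hypotheses of Theorem~\ref{teo:isomorphic}.

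Applying Theorem~\ref{teo:isomorphic}, I conclude that $(N,+)$ and $(N,\circ)$ have the same number of elements of each order. The only additional point needed to get the isomorphism $1+N\cong N$ is the abelianness clause of the theorem: since $N$ is a \emph{commutative} ring, the unit group $(1+N,\cdot)$, and hence $(N,\circ)$, is abelian. Therefore the final assertion of Theorem~\ref{teo:isomorphic} gives $(N,+)\cong(N,\circ)$ as groups, and composing with the isomorphism $(N,\circ)\cong(1+N,\cdot)$ yields $N\cong 1+N$ as required.

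The main thing to be careful about, rather than a genuine obstacle, is bookkeeping: one must verify that the $D$-algebra (or at least $D$-module) structure on $N$ is exactly what makes $\gamma_x=(1+x)\cdot(-)$ a $D$-module automorphism, i.e. that scalar multiplication by $D$ commutes with ring multiplication by $1+x$. This is immediate when $N$ is a $D$-algebra, since then $r(ab)=(ra)b=a(rb)$ for $r\in D$; the hypothesis ``$N$ is a $D$-module of $\mathrm{rank}_D N<p-1$'' in the statement should be read in this algebra sense so that Example~\ref{ex:radringRbrace} applies verbatim. Once this compatibility is in place, the corollary is a direct translation of Theorem~\ref{teo:isomorphic} through the radical-ring/adjoint-group dictionary, with the commutativity of $N$ supplying the abelianness of $(N,\circ)$.
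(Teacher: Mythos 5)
Your proof is correct and follows essentially the same route as the paper: identify the adjoint brace of the nilpotent ring as a $D$-brace via Example~\ref{ex:radringRbrace}, apply Theorem~\ref{teo:isomorphic}, and use commutativity of $N$ to invoke the abelian clause together with $(1+N,\cdot)\cong(N,\circ)$. Your closing caveat---that the $D$-module structure must be compatible with the ring multiplication (i.e.\ an algebra-type condition) for $\gamma_x=(1+x)\cdot(-)$ to lie in $\mathrm{Aut}_D(N)$---is a fair reading of a hypothesis the paper leaves implicit, and matches the paper's stated intent elsewhere that the radical ring carry an $R$-algebra structure.
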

\begin{proof} The brace associated to $N$ is a $D$-brace (see Example~\ref{ex:radringRbrace}), therefore, since 
for a nilpotent ring  $(1+N,\cdot)\cong(N,\circ)$, Theorem~\ref{teo:isomorphic} applies.
\end{proof}

In the following we show that the assumption on the ring $D$ in  Theorem~\ref{teo:isomorphic} is not so restrictive, since in many cases it can be satisfied with  $D\ne\Z$.

\subsection{Module braces over $p$-adic rings}
As we noted above, the rings of integers of unramified extensions of the field $\Q_p$ enjoy the properties prescribed for the ring $D$ in Theorem~\ref{teo:isomorphic}. We now recall the definition and some of their properties, which can be found for example in \cite[Ch. II, \S4]{Lang94ANT}, \cite[Ch. III, \S 5]{Ser95}.

Let $p$ be a prime. A finite extension $K/\Q_p$ is called \emph{unramified} if the ideal generated by $p$ is prime in the ring of integer $\mathcal O_K$ of $K$. Since  $\mathcal O_K$ is a discrete valuation ring, the only ideals of $\mathcal O_K$ are the powers of  the maximal ideal, that in this case is $p\mathcal O_K$.
\nota{Rivedere}
 \begin{prop}
\label{prop:unram}
{\sl
Let $\bar \Q_p$ be a fixed algebraic closure of $\Q_p$. For each $\lambda\ge1$ there exists a unique unramified extension of $\Q_p$  of degree $\lambda$ contained in $\bar\Q_p$.
This extension is  a Galois extension of $\Q_p$, and it is generated by any root of any monic polynomial $\tilde g(x)\in\Z_p[x]$ of degree $\lambda$, whose reduction modulo $p$ is irreducible.  
The ring of integers  of such extension is  a discrete valuation ring with maximal ideal $(p)$, and it is generated over $\Z_p$ by any root of $\tilde g(x)$.
}
\end{prop}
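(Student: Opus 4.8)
The plan is to reduce everything to the theory of finite fields via the residue field, using Hensel's lemma as the bridge. Recall that $\Z_p$ is a complete discrete valuation ring with residue field $\Z_p/(p)=\fp$, and that for any finite extension $K/\Q_p$ the ring of integers $\mathcal O_K$ is again a complete discrete valuation ring, finite and free over $\Z_p$ of rank $[K:\Q_p]$, with $[K:\Q_p]=ef$, where $e$ is the ramification index (the $\mathcal O_K$-valuation of $p$) and $f=[\mathcal O_K/\mathfrak m_K:\fp]$ is the residue degree. In this language $K$ is unramified of degree $\lambda$ precisely when $e=1$ and $f=\lambda$, which is the same as saying that $p$ generates the maximal ideal and that the residue field is $\F_{p^\lambda}$.

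For existence together with the generator statement, I would start from a monic $\tilde g\in\Z_p[x]$ of degree $\lambda$ whose reduction $\bar g\in\fp[x]$ is irreducible. Since $\Z_p$ is a UFD and $\tilde g$ is monic, any nontrivial factorisation of $\tilde g$ over $\Z_p$ would reduce to a nontrivial factorisation of $\bar g$ into monic factors; hence $\bar g$ irreducible forces $\tilde g$ irreducible over $\Z_p$, and by Gauss's lemma over $\Q_p$. Thus for a root $\alpha$ the field $K=\Q_p(\alpha)$ has degree $\lambda$. The image $\bar\alpha$ of $\alpha$ in the residue field is a root of $\bar g$, so $\fp(\bar\alpha)=\F_{p^\lambda}$ embeds into $\mathcal O_K/\mathfrak m_K$ and $f\ge\lambda$; as $ef=\lambda$, this forces $f=\lambda$ and $e=1$. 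Hence $K$ is unramified of degree $\lambda$, $p$ is a uniformiser, and $\mathfrak m_K=(p)$. At least one admissible $\tilde g$ exists because $\fp$ admits irreducible polynomials of every degree (equivalently $\F_{p^\lambda}$ exists). Next I would show $\mathcal O_K=\Z_p[\alpha]$: the powers $1,\alpha,\dots,\alpha^{\lambda-1}$ reduce modulo $p$ to an $\fp$-basis of $\F_{p^\lambda}=\mathcal O_K/(p)$, since $\bar\alpha$ has minimal polynomial $\bar g$ of degree $\lambda$; therefore $\Z_p[\alpha]+p\mathcal O_K=\mathcal O_K$, and as $\mathcal O_K$ is a finitely generated $\Z_p$-module with $p$ in the maximal ideal of the local ring $\Z_p$, Nakayama's lemma applied to $\mathcal O_K/\Z_p[\alpha]$ gives $\Z_p[\alpha]=\mathcal O_K$.

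Finally, for uniqueness and the Galois property I would pass through roots of unity. For any unramified extension $K/\Q_p$ of degree $\lambda$ the residue field is $\F_{p^\lambda}$, and since $x^{p^\lambda-1}-1$ has simple roots modulo $p$, Hensel's lemma lifts a primitive $(p^\lambda-1)$-th root of unity to some $\zeta\in\mathcal O_K$ of exact order $p^\lambda-1$; its reduction generates $\F_{p^\lambda}$, so $\Q_p(\zeta)$ is already unramified of degree $\lambda$, whence $K=\Q_p(\zeta)=\Q_p(\zeta_{p^\lambda-1})$. Since $\bar\Q_p^\times$ contains a unique subgroup of $(p^\lambda-1)$-th roots of unity, this cyclotomic field is the unique unramified extension of degree $\lambda$ inside $\bar\Q_p$; being cyclotomic it is abelian, hence Galois, over $\Q_p$. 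Combined with the second paragraph, any root of any admissible $\tilde g$ generates this same field.

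The main obstacle is precisely this bridge between residue-field data and the field itself: once Hensel's lemma is invoked to lift simple roots — both the root of unity that pins down uniqueness and, implicitly, the factorisation that controls irreducibility — the remaining assertions (degree, uniformiser, and $\mathcal O_K=\Z_p[\alpha]$) are formal consequences of the identity $\lambda=ef$ and of Nakayama's lemma. Everything else is bookkeeping with finite fields and complete discrete valuation rings.
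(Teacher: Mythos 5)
Your argument is correct. Note, however, that the paper does not actually prove Proposition~\ref{prop:unram}: it is recalled as a classical fact with a pointer to Lang and Serre, so there is no in-paper proof to compare against. Your write-up is essentially the standard textbook argument from those references, and all the steps check out: irreducibility of $\tilde g$ over $\Z_p$ descends from irreducibility of $\bar g$ (the factors of a monic polynomial over the local ring $\Z_p$ can be normalised to be monic, so a nontrivial factorisation would reduce to one of $\bar g$); the inequality $f\ge\lambda$ combined with $ef=\lambda$ pins down $e=1$ and $f=\lambda$; $\mathcal O_K=\Z_p[\alpha]$ follows from $\Z_p[\alpha]+p\mathcal O_K=\mathcal O_K$ by Nakayama since $\mathcal O_K$ is module-finite over the local ring $\Z_p$; and uniqueness and normality both drop out of the identification of the degree-$\lambda$ unramified extension with $\Q_p(\zeta_{p^\lambda-1})$ via Hensel lifting of a primitive $(p^\lambda-1)$-th root of unity (here the lifted $\zeta$ has exact order $p^\lambda-1$ because its reduction already does, and $\Q_p(\zeta)$ is the splitting field of the separable polynomial $x^{p^\lambda-1}-1$, hence Galois with abelian group inside $(\Z/(p^\lambda-1)\Z)^\times$). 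The only point worth making explicit is that two different roots of the same $\tilde g$, or of two different admissible polynomials, generate the same subfield of $\bar\Q_p$ --- but this is immediate from the uniqueness you establish, since each such root generates \emph{some} unramified extension of degree $\lambda$ inside $\bar\Q_p$. No gaps.
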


For each prime $p$ and for each positive integer $\lambda$ we will denote by $\Q_p(\lambda)$ the unique unramified extension of $\Q_p$  of degree $\lambda$ contained in $\bar\Q_p$, and by
 $\Z_p(\lambda)$ its ring of integers.

The following lemma and the next corollary quantify the gain we have by appealing  to  Theorem~\ref{teo:isomorphic} instead of \cite[Theorem~1]{Bac16} when dealing with a $\Z_p(\lambda)$-brace. 
\begin{lemma}
\label{lemma:rango}
{\sl
Let $M$ be a finite $\Z_p(\lambda)$-module. Then,
$${\rm rank}_\Z(M)=\lambda\cdot {\rm rank}_{\Z_p(\lambda)}(M).$$
}
\end{lemma}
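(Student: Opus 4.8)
The plan is to apply the structure theorem for finitely generated modules over a PID on both sides of the claimed equality, and to bridge the two decompositions through the single fact that $\Z_p(\lambda)$ is free of rank $\lambda$ as a $\Z_p$-module.

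First I would recall from Proposition~\ref{prop:unram} that $\Z_p(\lambda)$ is a discrete valuation ring whose maximal ideal is $(p)$; in particular it is a PID whose only nonzero ideals are the powers $(p^a)$, and it is free of rank $\lambda$ over $\Z_p$. Since $M$ is finite it is a torsion $\Z_p(\lambda)$-module, so by the structure theorem and the definition of $D$-rank recalled above it decomposes as
$$M\cong\bigoplus_{i=1}^{r}\Z_p(\lambda)/(p^{a_i}),$$
where $r={\rm rank}_{\Z_p(\lambda)}(M)$; indeed the indecomposable cyclic torsion $\Z_p(\lambda)$-modules are exactly the $\Z_p(\lambda)/(p^{a})$, because $(p)$ is the unique nonzero prime of $\Z_p(\lambda)$.

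The key step is to read off the abelian-group structure of a single cyclic factor. Using $\Z_p(\lambda)\cong\Z_p^{\lambda}$ as $\Z_p$-modules together with $\Z_p/p^{a}\Z_p\cong\Z/p^{a}\Z$, I would obtain
$$\Z_p(\lambda)/(p^{a})\cong\Z_p^{\lambda}/p^{a}\Z_p^{\lambda}\cong(\Z/p^{a}\Z)^{\lambda},$$
a finite abelian group of $\Z$-rank $\lambda$. Summing over the factors gives $M\cong\bigoplus_{i=1}^{r}(\Z/p^{a_i}\Z)^{\lambda}$ as abelian groups, which is already expressed as a direct sum of indecomposable cyclic $\Z$-modules, hence ${\rm rank}_\Z(M)=\sum_{i=1}^{r}\lambda=\lambda r=\lambda\cdot{\rm rank}_{\Z_p(\lambda)}(M)$.

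There is no serious obstacle here: the only point deserving care is the freeness of $\Z_p(\lambda)$ of rank $\lambda$ over $\Z_p$, which is precisely what guarantees that reducing one cyclic $D$-factor modulo $p^{a}$ produces exactly $\lambda$ cyclic $\Z$-factors of the same exponent; everything else is the structure theorem applied over the two PIDs $\Z_p(\lambda)$ and $\Z$.
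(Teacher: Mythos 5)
Your proof is correct and follows essentially the same route as the paper: decompose $M$ into indecomposable cyclic $\Z_p(\lambda)$-factors, identify each $\Z_p(\lambda)/(p^{a})$ with $(\Z/p^{a}\Z)^{\lambda}$ via the rank-$\lambda$ freeness of $\Z_p(\lambda)$, and count. If anything, your phrasing of the key identification (freeness over $\Z_p$ rather than the paper's ``free $\Z$-module of rank $\lambda$'') is the more precise one, since $\Z_p(\lambda)$ is not finitely generated over $\Z$.
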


\begin{proof}
Consider the decomposition of $M$ as a sum of cyclic $\Z_p(\lambda)$-modules
$$M\cong\bigoplus_{i=1}^r \frac{\Z_p(\lambda)}{p^{c_i}\Z_p(\lambda)}.$$
Recalling that 
$\Z_p(\lambda)$ is  a free $\Z$-module of rank $\lambda$, we get
$$M\cong\bigoplus_{i=1}^r \frac{\Z_p(\lambda)}{p^{c_i}\Z_p(\lambda)}\cong\bigoplus_{i=1}^r\frac{\Z^\lambda}{p^{c_i}\Z^\lambda}\cong
\bigoplus_{i=1}^r\left(\frac{\Z}{p^{c_i}\Z}\right)^\lambda,$$
namely, if ${\rm rank}_{\Z_p(\lambda)}(M)=r$, then ${\rm rank}_\Z(M)=\lambda\cdot r$.
\end{proof}

Theorem~\ref{teo:isomorphic} and the previous lemma immediately give the following.
\begin{corollary}
\label{cor:rank}
{\sl
Let $(N,+,\circ)$ be a $\Z_p(\lambda)$-brace of order a power of $p$,  and assume that $\rm{rank}_\Z N<\lambda(p-1)$. 
Then $(N,+)$ and $(N,\circ)$ have the same number of element of each order.
In particular, if $(N,\circ)$ is abelian, then $(N,+)\cong (N,\circ)$.
}
\end{corollary}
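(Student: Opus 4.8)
The plan is to deduce the statement directly from Theorem~\ref{teo:isomorphic} applied to the ring $D=\Z_p(\lambda)$, using Lemma~\ref{lemma:rango} only to translate the rank hypothesis from $\Z$ to $\Z_p(\lambda)$. The corollary is essentially a specialisation, so the work consists entirely of checking that the hypotheses of the theorem are met.

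First I would verify that $D=\Z_p(\lambda)$ satisfies the standing assumptions of Theorem~\ref{teo:isomorphic}. By Proposition~\ref{prop:unram}, $\Z_p(\lambda)$ is the ring of integers of the unramified extension $\Q_p(\lambda)$, and it is a discrete valuation ring with maximal ideal $(p)$; in particular it is a PID in which $p$ is a prime element. Since $N$ is by hypothesis a $\Z_p(\lambda)$-brace of order a power of $p$, every assumption of the theorem is in place except the rank bound.

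Next I would check the rank bound. Theorem~\ref{teo:isomorphic} requires ${\rm rank}_{\Z_p(\lambda)} N < p-1$. By Lemma~\ref{lemma:rango} we have ${\rm rank}_\Z N = \lambda\cdot {\rm rank}_{\Z_p(\lambda)} N$, so dividing the standing hypothesis ${\rm rank}_\Z N < \lambda(p-1)$ by the positive integer $\lambda$ gives exactly ${\rm rank}_{\Z_p(\lambda)} N < p-1$. Applying Theorem~\ref{teo:isomorphic} with $D=\Z_p(\lambda)$ then yields that $(N,+)$ and $(N,\circ)$ have the same number of elements of each order, and the final clause (the isomorphism $(N,+)\cong(N,\circ)$ when $(N,\circ)$ is abelian) is the corresponding clause of the theorem. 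There is no substantive obstacle: the entire content is packaged into the two cited results, and the only point to watch is that the strict inequality is preserved under division by $\lambda$, which it clearly is.
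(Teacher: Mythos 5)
Your proof is correct and follows exactly the paper's route: the paper likewise deduces the corollary immediately from Theorem~\ref{teo:isomorphic} together with Lemma~\ref{lemma:rango}, which converts the hypothesis ${\rm rank}_\Z N<\lambda(p-1)$ into ${\rm rank}_{\Z_p(\lambda)}N<p-1$. Your additional check that $\Z_p(\lambda)$ is a PID with $p$ prime (via Proposition~\ref{prop:unram}) is a sensible verification that the paper leaves implicit.
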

%
%
%
\begin{remark}
\label{rem:nrings}
{\rm Let $K$ be a finite extension of $\Q_p$. Denote by  $\mathcal O_K$  its ring of integers and let $P_K$ be its maximal ideal.
If $[\mathcal O_K/P_K:\F_p]=\lambda$, then $\Z_p(\lambda)\subseteq\mathcal O_K$ (this is a classical fact and an easy consequence of  \cite[Proposition10]{Lang94ANT}), so every $\mathcal O_K$-brace is also a 
$\Z_p(\lambda)$-brace by restriction of the action (see Lemma~\ref{lemma:lifting}), therefore we can apply the previous corollary to those having a ``small" $\Z_p(\lambda)$-rank.

In the next subsection we will discuss a generalisation of this argument to module braces over a general local ring.
}
\end{remark}

\subsection{Module braces over local rings}
In the following, for a local ring $S$ we will use the notation $(S, \mathfrak m, \F_{p^{\lambda}})$ to encode the information on the ring $S$, namely to intend that  $\mathfrak m$ is its maximal ideal  and that its residue field $S/\mathfrak m$ is isomorphic to  $ \F_{p^{\lambda}}$. In this case, if $S$ is finite its characteristic is a power of $p$.

\begin{lemma}
{\sl
\label{lemma:padic}
Let $(S, \mathfrak m, \F_{p^{\lambda}})$ be a finite  local ring.
Then every $S$-brace is also a $\Z_p(\lambda)$-braces, by restriction of scalars.
}
\end{lemma}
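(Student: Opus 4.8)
The plan is to reduce the statement to the construction of a single ring homomorphism $f\colon\Z_p(\lambda)\to S$. Indeed, once such an $f$ is available, Lemma~\ref{lemma:lifting} applies verbatim: every $S$-brace acquires a natural $\Z_p(\lambda)$-brace structure by restriction of scalars via $f$, which is exactly the assertion. Thus the entire content of the lemma is the production of $f$, and everything else is formal.

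First I would record that $S$ is canonically a $\Z_p$-algebra. Since $\mathfrak m$ is the Jacobson radical of the finite local ring $S$ it is nilpotent, and because the residue field has characteristic $p$ we have $p\in\mathfrak m$; hence $p^k\cdot 1=0$ in $S$ for $k$ large, so the characteristic of $S$ is a power $p^n$. Consequently the prime ring of $S$ is $\Z/p^n\Z$, and the canonical surjection $\Z_p\twoheadrightarrow\Z_p/p^n\Z_p=\Z/p^n\Z$ composed with the inclusion of the prime ring yields a ring homomorphism $\Z_p\to S$, through which I regard $S$ as a $\Z_p$-algebra. Next I would present $\Z_p(\lambda)$ concretely: choose a monic irreducible $\bar g\in\fp[x]$ of degree $\lambda$ together with a monic lift $\tilde g\in\Z_p[x]$. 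Its reduction modulo $p$ is $\bar g$, which is irreducible, so by Proposition~\ref{prop:unram} the ring $\Z_p(\lambda)$ is generated over $\Z_p$ by any root of $\tilde g$; as $\tilde g$ is monic of degree $\lambda=[\Q_p(\lambda):\Q_p]$, this gives an isomorphism $\Z_p(\lambda)\cong\Z_p[x]/(\tilde g)$.

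The key step is to produce a root of $\tilde g$ in $S$, by an argument of Hensel type. Reducing modulo $\mathfrak m$, the image of $\tilde g$ in $(S/\mathfrak m)[x]=\F_{p^\lambda}[x]$ is $\bar g\in\fp[x]$, which has a root $\bar\alpha\in\F_{p^\lambda}=S/\mathfrak m$. Since $\fp$ is perfect, $\bar g$ is separable, so $\bar\alpha$ is a simple root and $\bar g'(\bar\alpha)\neq 0$ in the field $\F_{p^\lambda}$. I would then lift $\bar\alpha$ to a genuine root by Newton iteration: starting from any lift $a_0\in S$ of $\bar\alpha$, the element $\tilde g'(a_0)$ reduces to the unit $\bar g'(\bar\alpha)$ modulo $\mathfrak m$ and is therefore a unit of the local ring $S$, so the recursion $a_{i+1}=a_i-\tilde g(a_i)\,\tilde g'(a_i)^{-1}$ is well defined and forces the error term $\tilde g(a_i)$ into ever higher powers of $\mathfrak m$ (each step squares the ideal in which it lies). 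Because $\mathfrak m$ is nilpotent, the sequence stabilises after finitely many steps at an element $a\in S$ with $\tilde g(a)=0$ and $a\equiv\bar\alpha\pmod{\mathfrak m}$.

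The assignment $x\mapsto a$ then defines a $\Z_p$-algebra homomorphism $\Z_p[x]/(\tilde g)\cong\Z_p(\lambda)\to S$, which is the desired $f$; applying Lemma~\ref{lemma:lifting} completes the argument. I expect the main obstacle to be exactly the existence of the Hensel lift, that is, checking that $\bar\alpha$ is a simple root so that $\tilde g'(a_0)$ is a unit of $S$; once invertibility of the derivative is secured, the nilpotence of $\mathfrak m$ makes the lifting routine and the construction of $f$ goes through.
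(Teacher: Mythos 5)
Your proposal is correct and follows essentially the same route as the paper's proof: reduce via Lemma~\ref{lemma:lifting} to constructing a ring homomorphism $\Z_p(\lambda)\to S$, realise $\Z_p(\lambda)$ as $\Z_p[x]/(\tilde g)$ for a monic lift $\tilde g$ of an irreducible $\bar g\in\fp[x]$ of degree $\lambda$, and produce a root of $\tilde g$ in $S$ by Newton iteration, using that the derivative is a unit (separability of $\bar g$ over the perfect field $\fp$) and that $\mathfrak m$ is nilpotent. The only cosmetic difference is that the paper starts from a primitive element $\bar\xi$ of the residue field and takes its minimal polynomial, whereas you choose the irreducible polynomial first and find a root of it in $S/\mathfrak m\cong\F_{p^\lambda}$; the two are equivalent.
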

\begin{proof}
By Lemma~\ref{lemma:lifting}, we are reduced to show that 
there exists a ring homomorphism  $\Z_p(\lambda)\to S$.

Let ${\rm char}(S)=p^c$.
The residue field $S/\mathfrak m$ is 
 of the form 
$\F_p(\bar\xi)$, for some $\xi\in S^*$ , with $\bar\xi$ of degree $\lambda$ over $\F_p$ (here $\bar\xi$ denotes the class of $\xi$ modulo $\mathfrak{m}$). 

Let $\bar g(x)\in\F_p[x]$ be the minimal polynomial of $\bar\xi$ over $\F_p$, and  let $g(x)\in \frac{\Z}{p^c\Z}[x]$ be a monic lifting of $\bar g(x)$. 

Up to changing $\xi$ with an other element in the same class modulo $\mathfrak m$, we can assume that $g(\xi)=0$.
This can be easily shown by arguing as in the classical proof of the Hensel's Lemma (see for example \cite[Section 2.2]{Ser95}). In fact, $\bar g(\bar\xi)=0$ means that $g(\xi)\in \mathfrak m$. On the other hand,  $g'(\xi) \not \in\mathfrak m$ since $\bar g(x)\in \F_p[x]$ is  irreducible and $\F_p$ is a perfect field, so its  derivative  $\bar g'(x)$ is not zero, hence it  does not vanish in $\bar\xi$. Therefore, $g'(\xi)$ is invertible in $S$, and, denoting by $u$ its inverse, we have that $\xi_1=\xi-g(\xi)u\in S$ and is in the same class of $\xi$ modulo $\mathfrak m$. Now,
$$g(\xi_1)=g(\xi-g(\xi)u)\equiv g(\xi)-g(\xi)ug'(\xi)\equiv0\pmod{g(\xi)^2},$$
so $g(\xi_1)\in \mathfrak m^2$.
Since $\mathfrak m$ is nilpotent, by iterating this construction,  in a finite number of steps we get what claimed.

Consider the composition map 
$$\psi\colon\Z_p[x]\to \frac{\Z}{p^c\Z}[x]\to S$$
where the first map is the reduction modulo $p^c$ and the second is the evaluation homomorphism $x\mapsto\xi.$
Let $\tilde g(x)\Z_p[x]$ be any monic lifting of $g(x)$.
Clearly, $\psi(\tilde g(x))=g(\xi)=0$, so $\psi$ induces a homomorphism 
$\psi'\colon\Z_p[x]/(\tilde g(x))\to S$.
Finally, by Proposition~\ref{prop:unram}, $\Z_p[x]/(\tilde g(x))\cong \Z_p(\lambda)$ and this concludes the proof.
\end{proof}

Corollary~\ref{cor:rank} and the previous lemma immediately give the following.
\begin{corollary}
\label{cor:rank-local}
{\sl
Let $(S, \mathfrak m, \F_{p^{\lambda}})$ be a  local ring, and let $(N,+,\circ)$ be a $S$-brace of order a power of $p$.
If $\rm{rank}_\Z N<\lambda(p-1)$,
then $(N,+)$ and $(N,\circ)$ have the same number of element of each order.
In particular, if $(N,\circ)$ is abelian, then $(N,+)\cong (N,\circ)$.
}
\end{corollary}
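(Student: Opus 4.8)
The statement follows by combining the two immediately preceding results, so the plan is essentially to chain them together and verify the hypotheses transfer correctly. The corollary concerns an $S$-brace $(N,+,\circ)$ over a finite local ring $(S,\mathfrak m,\F_{p^\lambda})$ of $p$-power order, under the hypothesis $\mathrm{rank}_\Z N<\lambda(p-1)$, and it asks for the conclusion that $(N,+)$ and $(N,\circ)$ have the same number of elements of each order (with the isomorphism conclusion when $(N,\circ)$ is abelian). The two tools are Lemma~\ref{lemma:padic}, which says every $S$-brace is also a $\Z_p(\lambda)$-brace by restriction of scalars, and Corollary~\ref{cor:rank}, which gives exactly the desired conclusion for a $\Z_p(\lambda)$-brace under the hypothesis $\mathrm{rank}_\Z N<\lambda(p-1)$.

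The proof is therefore very short. First I would invoke Lemma~\ref{lemma:padic} to view $N$ as a $\Z_p(\lambda)$-brace; here one should note that $S$ being local with residue field $\F_{p^\lambda}$ forces $S$ to be finite (or one simply adds finiteness, as in the lemma's statement) so that the ring homomorphism $\Z_p(\lambda)\to S$ exists. The crucial point to check is that the restriction of scalars does not alter the underlying additive group $(N,+)$, nor the module-theoretic $\Z$-structure: the $\Z$-module structure on $N$ is the same whether $N$ is regarded as an $S$-module or a $\Z_p(\lambda)$-module, since in both cases it is induced through $\Z\to\Z_p(\lambda)\to S$. Hence the quantity $\mathrm{rank}_\Z N$ is unchanged, and the hypothesis $\mathrm{rank}_\Z N<\lambda(p-1)$ carries over verbatim to the $\Z_p(\lambda)$-brace.

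Second, with $N$ now a $\Z_p(\lambda)$-brace of $p$-power order satisfying $\mathrm{rank}_\Z N<\lambda(p-1)$, I would apply Corollary~\ref{cor:rank} directly to obtain that $(N,+)$ and $(N,\circ)$ have the same number of elements of each order, and the isomorphism $(N,+)\cong(N,\circ)$ when $(N,\circ)$ is abelian. Since both the additive and multiplicative group structures are intrinsic to the brace and are untouched by restriction of scalars, the conclusion stated for the $\Z_p(\lambda)$-brace is literally the conclusion sought for the original $S$-brace.

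I do not expect any genuine obstacle: the whole content has been pushed into the preceding lemma and corollary. The only subtlety worth flagging explicitly is the invariance of $\mathrm{rank}_\Z N$ under the change of scalars, which is immediate once one observes that the $\Z$-action factors through the same composite $\Z\to\Z_p(\lambda)\to S$; I would state this in one sentence rather than belabor it.
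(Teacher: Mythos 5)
Your overall route is exactly the paper's: restrict scalars along a ring homomorphism $\Z_p(\lambda)\to S$ via Lemma~\ref{lemma:padic}, observe that this changes neither the additive group nor $\mathrm{rank}_\Z N$, and then quote Corollary~\ref{cor:rank}. However, there is one genuine flaw in your handling of the finiteness of $S$. You assert that ``$S$ being local with residue field $\F_{p^\lambda}$ forces $S$ to be finite''; this is false --- $\Z_p$ and $\F_p[[x]]$ are local with finite residue field but infinite --- and Lemma~\ref{lemma:padic} is stated only for \emph{finite} local rings (its proof uses, e.g., the nilpotency of $\mathfrak m$ and reduction modulo $p^c={\rm char}(S)$). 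Your fallback of ``simply adding finiteness as a hypothesis'' would prove a strictly weaker statement than the one asserted, since the corollary is stated for an arbitrary local ring $(S,\mathfrak m,\F_{p^\lambda})$.

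The correct repair, which is what the paper does, is to replace $S$ by $S/\mathrm{Ann}_S(N)$: since $N$ is finite and faithful over this quotient, the quotient is a finite ring; and because $\mathrm{Ann}_S(N)\subseteq\mathfrak m$ (as $N\neq 0$), the quotient is still local with the same residue field $\F_{p^\lambda}$, while the brace and module structures on $N$ are unchanged. With that one sentence inserted, the rest of your argument --- including the (correct and worth stating) observation that the $\Z$-action, hence $\mathrm{rank}_\Z N$, is unaffected by the restriction of scalars --- goes through and matches the paper's proof.
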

\begin{proof}
First of all, we notice that, being $N$ finite, up to changing $S$ with its quotient by the annihilator of $N$, we can assume that $N$ is a faithful module and therefore also $S$ is finite
to be finite. Clearly this does not change its property of $S$ being local or its residue field. Therefore, by Lemma~\ref{lemma:padic}, $N$ has naturally a $\Z_p(\lambda)$-structure and we can apply Corollary~\ref{cor:rank}.
\end{proof}

\subsection{Finite module brace over a general ring}

We now show  that the results obtained for module braces over a local ring can be  applied to finite any $R$-brace $N$, where $R$ is any commutative ring.

Let ${\rm Ann}_R(N)$  be the annihilator of $N$ in $R$, and put $A=R/{\rm Ann}_R(N)$.
Then $N$ is a faithful $A$-module with the induced structure, and being $N$ finite, the ring $A$ is finite, and therefore artinian. By the structure theorem of artinian rings, we have  
\begin{equation}
\label{eq:decA}
A=\bigoplus_{i=1}^t A_i
\end{equation}
where the $A_i$'s are local artinian (in this case finite) rings, of characteristic a power of a prime $p_i$, say. We fix the notation $(A_i,\mathfrak m_i, \F_{p_i^{\lambda_i}})$.

%
Let $e_1,\dots, e_t$ be the orthogonal idempotents of $A$ such that $e_1+\dots +e_t=1$ and $A_i=Ae_i$.
By Proposition~\ref{prop:decomposition},
letting  $N_i=Ne_i$, we have 
\begin{equation}
\label{eq:decN} 
N=\bigoplus_{i=1}^t N_i,
\end{equation}
where this equality holds for $(N,+)$, and for $N$ as a brace in the case when $N_1,\dots, N_t$ are ideals of $N$.
In any case, since  $N_1,\dots, N_t$ are always left ideals, some information on the $A$-brace ($R$-brace) $N$ can be derived from the study of its subbraces $N_1,\dots, N_t$, which are braces over a finite local ring. 

We can then apply the results obtained above to this general case, getting 
the following.
\begin{prop}
\label{prop:padic2}
{\sl 
Let $R$ be a commutative  ring and  
let $(N,+,\circ)$ be a finite $R$-brace.

Let $A=R/{\rm Ann}_R(N)$, 
and consider the decomposition of $A$ into a sum  of finite local rings $(A_i,\mathfrak m_i, \F_{p_i^{\lambda_i}})$ as in \eqref{eq:decA}, and the decomposition of $N$ given in \eqref{eq:decN}.
The following hold.
\begin{enumerate}
\item 
For each $i=1,\dots, t,$ the subbrace $N_i$ has cardinality a power of $p_i$ and  is a $\Z_{p_i}(\lambda_i)$-brace.
\item
If $N$ has order a power of a  prime $p$, then $p_i=p$ for all $i$, and $N$ is a $\Z_p(\lambda)$-brace where $\lambda=\gcd\{\lambda_i\mid\ i=1,\dots, t\}$.
\end{enumerate}
}
\end{prop}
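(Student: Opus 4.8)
The plan is to assemble the statement from the machinery already in place, splitting it into the two claimed parts. First I would unwind the setup: we pass to $A=R/{\rm Ann}_R(N)$, so that $N$ becomes a faithful, finite $A$-module, hence $A$ is finite and artinian, and the structure theorem gives the decomposition $A=\bigoplus_{i=1}^t A_i$ into finite local rings $(A_i,\mathfrak m_i,\F_{p_i^{\lambda_i}})$ as in \eqref{eq:decA}. Applying Proposition~\ref{prop:decomposition} to the idempotents $e_1,\dots,e_t$ (with $A_i=Ae_i$) yields $N=\bigoplus_{i=1}^t N_i$ with $N_i=Ne_i$, where each $N_i$ is at least a left $R$-ideal and is an $A_i$-module annihilated by $A_j$ for $j\ne i$, so that $N_i$ is naturally a faithful module over the local ring $A_i$. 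By Lemma~\ref{lemma:Rsubb} each $N_i$ is itself an $A_i$-brace.

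For part (1), I first observe that since $A_i$ has characteristic a power of the prime $p_i$ and $N_i$ is an $A_i$-module, the additive order of every element of $N_i$ is a power of $p_i$; as $N_i$ is finite this forces $|N_i|$ to be a power of $p_i$. Then the residue field of $A_i$ is $\F_{p_i^{\lambda_i}}$, so Lemma~\ref{lemma:padic} applies verbatim to the finite local ring $(A_i,\mathfrak m_i,\F_{p_i^{\lambda_i}})$: every $A_i$-brace, in particular $N_i$, is a $\Z_{p_i}(\lambda_i)$-brace by restriction of scalars. This gives the first claim directly.

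For part (2), suppose $|N|=p^m$ for a single prime $p$. Each $N_i$ is a subgroup of $(N,+)$, hence $|N_i|$ is a power of $p$; but part (1) says $|N_i|$ is a power of $p_i$, so either $N_i=0$ or $p_i=p$. Discarding any zero summands, we may assume $p_i=p$ for all $i$, so every residue field is $\F_{p^{\lambda_i}}$. It remains to produce a single $\Z_p(\lambda)$-structure on all of $N$ with $\lambda=\gcd\{\lambda_i\}$. The key arithmetic input is that the unramified extensions of $\Q_p$ form a lattice under degree: $\Z_p(\lambda)\subseteq\Z_p(\lambda_i)$ whenever $\lambda\mid\lambda_i$ (equivalently, $\F_{p^{\lambda}}\subseteq\F_{p^{\lambda_i}}$), which holds for $\lambda=\gcd\{\lambda_i\}$ by Proposition~\ref{prop:unram}. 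Composing the inclusion $\Z_p(\lambda)\hookrightarrow\Z_p(\lambda_i)$ with the map $\Z_p(\lambda_i)\to A_i$ furnished by Lemma~\ref{lemma:padic}, and combining these over $i$ via the idempotent decomposition $A=\bigoplus A_i$, produces a ring homomorphism $\Z_p(\lambda)\to A$; restriction of scalars through it (Lemma~\ref{lemma:lifting}) equips $N$ with a $\Z_p(\lambda)$-brace structure.

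The main obstacle I anticipate is part (2): the individual $\Z_p(\lambda_i)$-structures on the distinct summands $N_i$ must be made to descend to one common $\Z_p(\lambda)$-structure on the whole of $N$, not merely on each block. The cleanest way to secure this is to arrange the homomorphisms $\Z_p(\lambda)\to\Z_p(\lambda_i)$ compatibly so that they glue through the idempotents into a single $\Z_p(\lambda)\to A$; one must check that the resulting action of $\Z_p(\lambda)$ on $N$ is by $R$-module maps commuting with $\gamma$, which is automatic because it factors through the $A_i$-actions already known to lie in $\Aut_{A_i}(N_i)$ and $\gamma$ respects the direct-sum decomposition by Proposition~\ref{prop:dirsum}. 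Everything else reduces to the cited lemmas and the elementary fact that subgroups of a $p$-group have $p$-power order.
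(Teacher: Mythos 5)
Your proof is correct and follows the same overall strategy as the paper (reduce to the finite local blocks $A_i$, apply Lemma~\ref{lemma:padic} to each, then use $\Z_p(\lambda)\subseteq\Z_p(\lambda_i)$ for $\lambda=\gcd\{\lambda_i\}$), but the final gluing step in part (2) is done differently, and your version is arguably the more robust one. The paper concludes by noting each $N_i$ is a $\Z_p(\lambda)$-brace and invoking Lemma~\ref{lemma:dirprod} on $N=\bigoplus_i N_i$; strictly speaking that lemma produces a $\Z_p(\lambda)$-brace structure on the direct product of the braces $N_i$, and identifying this with the brace $N$ requires the $N_i$ to be ideals, whereas in general they are only left ideals. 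You instead assemble the maps $\Z_p(\lambda)\hookrightarrow\Z_p(\lambda_i)\to A_i$ into a single ring homomorphism $\Z_p(\lambda)\to A=\bigoplus_i A_i$ and apply Lemma~\ref{lemma:lifting} once to the whole $A$-brace $N$; since every $\gamma_x$ is already $A$-linear and the $\Z_p(\lambda)$-action factors through $A$, this gives the conclusion without any hypothesis on the $N_i$ being ideals. Two small remarks: your closing appeal to Proposition~\ref{prop:dirsum} is unnecessary (and would again presuppose the ideal case) --- the factorization through $A$ already does all the work; and the caveat about ``discarding zero summands'' in part (2) is moot, since $N$ is faithful over $A$ and $e_i\ne 0$, so $N_i=e_iN\ne 0$ for every $i$. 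Your derivations that $|N_i|$ is a power of $p_i$ (from $\operatorname{char}A_i$) and that $p_i=p$ (from $N_i\le (N,+)$) differ cosmetically from the paper's (which reads these off the $\Z_{p_i}(\lambda_i)$-module structure and from $\operatorname{char}A$ respectively) but are equally valid.
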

\begin{proof}
By Lemma~\ref{lemma:padic}  the $A_i$-brace $N_i$  is a $\Z_{p_i}(\lambda_i)$-brace. Therefore, $N_i$ is a direct sum of  cyclic and finite $\Z_{p_i}(\lambda_i)$-modules, so it is a $p_i$-group, proving (1). 

As for (2), we notice that, since $N$ is a $p$-group and a faithful $A$-module, then the characteristic of $A$ is a power of $p$. Therefore, the same is true for $A_1,\dots, A_t$, and part (1) ensures that  $N_i$ is a $\Z_p(\lambda_i)$-brace. Now, since $\lambda\mid \lambda_i$ for all $i$, then $\Z_p(\lambda)\subseteq \Z_p(\lambda_i)$, and  $N_i$ is a $\Z_p(\lambda)$-brace by restriction of the action, and so, by Lemma~\ref{lemma:dirprod} also
 $N=\bigoplus_{i=1}^tN_i$ is a $\Z_p(\lambda)$-brace.
\end{proof}
\begin{corollary}
{\sl
In the notation of Proposition~\ref{prop:padic2}, assume that $N$ has order the power of a prime $p$ and that the $\Z$-rank of  $(N,+)$ is $<\lambda(p-1).$ 
Then  $(N,+)$ and $(N,\circ)$ have the same number of element of each order, and if $(N,\circ)$ is abelian, then $(N,+)\cong (N,\circ)$.
}
\end{corollary}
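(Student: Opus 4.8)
The plan is to derive this corollary directly from Proposition~\ref{prop:padic2} together with the rank criterion already established in Corollary~\ref{cor:rank}; all of the substantive work has been done in those two results, so the proof is essentially a matter of chaining them. The key observation is that the smallness hypothesis ${\rm rank}_\Z N < \lambda(p-1)$, stated here in terms of the $\Z$-rank and the integer $\lambda$, matches verbatim the hypothesis of Corollary~\ref{cor:rank} once one knows that $N$ is a $\Z_p(\lambda)$-brace for this same $\lambda$.

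First I would invoke part (2) of Proposition~\ref{prop:padic2}. Since $N$ has order a power of the prime $p$ and is a faithful $A$-module, each local factor $A_i$ of the decomposition \eqref{eq:decA} has characteristic a power of $p$; hence every $p_i$ equals $p$, and the proposition endows $N$ with the structure of a $\Z_p(\lambda)$-brace, where $\lambda=\gcd\{\lambda_i\mid i=1,\dots,t\}$ is exactly the integer appearing in the hypothesis. Next I would note that this $N$ is a $\Z_p(\lambda)$-brace of order a power of $p$ whose $\Z$-rank is, by assumption, strictly less than $\lambda(p-1)$. This is precisely the setting of Corollary~\ref{cor:rank}, so applying that corollary immediately yields that $(N,+)$ and $(N,\circ)$ have the same number of elements of each order, and, in the case when $(N,\circ)$ is abelian, that $(N,+)\cong(N,\circ)$.

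There is no real obstacle here: the only point requiring a moment's care is the bookkeeping on $\lambda$, namely that the $\lambda$ in the rank bound is the same $\gcd\{\lambda_i\}$ produced by Proposition~\ref{prop:padic2}. Since the statement is phrased ``in the notation of Proposition~\ref{prop:padic2},'' this identification is built into the hypotheses and needs no further argument. Thus the proof consists simply of the two citations above, with the reduction to $p$-power order handled by Proposition~\ref{prop:padic2}(2) and the conclusion handled by Corollary~\ref{cor:rank}.
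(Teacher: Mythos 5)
Your proposal is correct and follows exactly the paper's own argument: invoke Proposition~\ref{prop:padic2}(2) to endow $N$ with a $\Z_p(\lambda)$-brace structure for $\lambda=\gcd\{\lambda_i\}$, then apply Corollary~\ref{cor:rank}. Nothing is missing.
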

\begin{proof}
By Proposition~\ref{prop:padic2}, $N$ is a $\Z_p(\lambda)$-module, so we can apply Corollary~\ref{cor:rank}.
\end{proof}

\begin{corollary}
\label{cor:finale}
{\sl 
In the notation of Proposition~\ref{prop:padic2}, assume that for some $i\in\{1,\dots,t\}$
\begin{equation}
\label{eq:rank}
{\rm rank}_\Z(N_i)<\lambda_i(p_i-1).
\end{equation}

Then, $(N_i,+)$ and $(N_i,\circ)$ have the same number of element of each order, and  if $(N_i,\circ)$ is abelian, then $(N_i,+)\cong (N_i,\circ)$.

If, in addition, $N_1,\dots, N_t$ are ideals of the brace $N$, and \eqref{eq:rank} holds for all $i$, then $(N,+)$ and $(N,\circ)$ have the same number of element of each order, and if $(N,\circ)$ is abelian, then $(N,+)\cong (N,\circ)$.
}
\end{corollary}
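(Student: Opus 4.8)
The plan is to deduce both parts directly from the preceding results, essentially as bookkeeping on top of Proposition~\ref{prop:padic2} and Corollary~\ref{cor:rank-local}. First I would observe that by Proposition~\ref{prop:padic2}(1), each subbrace $N_i$ has cardinality a power of $p_i$ and is a $\Z_{p_i}(\lambda_i)$-brace. Therefore $N_i$ is a finite brace over a ring of the type covered by Corollary~\ref{cor:rank}: it is a $\Z_{p_i}(\lambda_i)$-brace of order a power of $p_i$. The hypothesis \eqref{eq:rank}, namely ${\rm rank}_\Z(N_i)<\lambda_i(p_i-1)$, is exactly the $\Z$-rank hypothesis required by Corollary~\ref{cor:rank}. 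Applying that corollary to $N_i$ yields at once that $(N_i,+)$ and $(N_i,\circ)$ have the same number of elements of each order, and that $(N_i,+)\cong(N_i,\circ)$ whenever $(N_i,\circ)$ is abelian. This settles the first assertion.

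For the second assertion, I would first use the additional hypothesis that $N_1,\dots,N_t$ are ideals of the brace $N$. By Proposition~\ref{prop:decomposition} (applied through Proposition~\ref{prop:padic2}), the decomposition $N=\bigoplus_{i=1}^t N_i$ is then a decomposition of $R$-braces, so that $N\cong N_1\times\cdots\times N_t$ as braces, and in particular both $(N,+)\cong\prod_i(N_i,+)$ and $(N,\circ)\cong\prod_i(N_i,\circ)$ as groups. Since \eqref{eq:rank} now holds for every $i$, the first part of the corollary gives that $(N_i,+)$ and $(N_i,\circ)$ have the same number of elements of each order for all $i$.

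The remaining point is to pass from the factorwise statement to the statement for the full direct product. Here I would invoke the elementary fact that for finite abelian groups (and more generally finite groups), the function recording the number of elements of each order of a direct product is determined by the corresponding functions of the factors: the number of elements of order $n$ in $\prod_i G_i$ is a fixed combinatorial expression in the order-counting functions of the $G_i$. Hence if $(N_i,+)$ and $(N_i,\circ)$ agree in their order-counting functions for each $i$, the products $(N,+)=\prod_i(N_i,+)$ and $(N,\circ)=\prod_i(N_i,\circ)$ agree as well. Finally, if $(N,\circ)$ is abelian, then each $(N_i,\circ)$ is abelian, so the first part gives $(N_i,+)\cong(N_i,\circ)$ for every $i$, and taking direct products yields $(N,+)\cong(N,\circ)$.

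I do not expect a genuine obstacle here, since the corollary is a packaging of Corollary~\ref{cor:rank} over the decomposition of Proposition~\ref{prop:padic2}; the only step requiring a word of justification is the multiplicativity of the order-counting function over direct products, which is purely group-theoretic and independent of the brace structure.
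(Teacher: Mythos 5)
Your proof is correct and follows essentially the same route as the paper: the first part is Proposition~\ref{prop:padic2} combined with the rank corollaries (the paper cites Corollary~\ref{cor:rank-local}, you cite Proposition~\ref{prop:padic2}(1) plus Corollary~\ref{cor:rank}, which is the same chain of reasoning entered at a different point), and the second part uses Proposition~\ref{prop:decomposition} to split $N$ as a direct product of braces and then passes the order-counting statement through the product. Your explicit justification that the order-counting function of a direct product is determined by those of the factors is a small elaboration of a step the paper asserts without comment, not a different argument.
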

\begin{proof}
By Proposition~\ref{prop:padic2}, $N_i$ has cardinality a power of $p_i$, so the first part follows from Corollary~\ref{cor:rank-local} .

For the last part, we recall that if $N_1,\dots, N_t$ are ideals then, by Proposition~\ref{prop:decomposition},   $N=\bigoplus_{i=1}^t N_i$ 
as a brace, and since  $(N_i,+)$ and $(N_i,\circ)$ have the same number of element of each order, for each $i$, the same is true for their direct products. Finally,
  if also  $(N,\circ)$ is abelian, then necessarily $(N,\circ)\cong (N,+)$
\end{proof}

 \begin{remark}
\label{rem:units}
{\rm
As we  saw in the previous corollaries, interesting information on an $R$-brace can also be obtained with the application of Theorem~\ref{teo:isomorphic}, or its corollaries,  to its subbraces, even in the case when it can not be applied to the whole brace. 

Consider the case of an $R$-brace $N$, with an $R$-ideal $I$: we have the following 
exact sequence of $R$-braces
$$ 0\to I\to N\to N/I\to 0.$$
The exactness of this sequence of $R$-braces means that  both the additive  group of $(N, +)$ is an  extension of $(N/I,+ )$ by $(I,+)$, and the multiplicative group $(N,\circ)$ is an  extension of $(N/I,\circ )$ by $(I,\circ)$.
If we can somehow control the differences between the additive and the multiplicative structure of $I$ and $N/I$ (for example by applying to them Theorem~\ref{teo:isomorphic}, or its corollaries) we we can deduce information on the relation between the additive and the multiplicative group of the brace $N$.

In particular, if it can be proven that $I$ and $N/I$ are trivial braces, then both the additive and the multiplicative group of $N$ are extension of $N/I$ by $I$ (see also  Lemma~\ref{lemma:csv2.3} and Remark~\ref{rem:s-esatta}).
%
%
%
 }
  \end{remark}

\begin{remark}
\label{rem:units}
{\rm
The result of this section can be fruitfully applied for example in the study of Fuchs' question on the classification of the abelian groups that can be realised as group of units of a commutative ring.  
This is an old question, that originally appears in \cite[Problem 72]{Fuchs60}, which is far from being completely solved. 

In  \cite{dcdAMPA, dcdBLMS} R.Dvornicich and the author considered the case of \emph{finite} abelian group, and in  \cite[Theorem 3.1]{dcdAMPA} and \cite[Subsection 5.3]{dcdBLMS}  showed that most of the information on the group of units of a ring $R$ is contained into the group $1+\mathfrak N$ where $\mathfrak N$ is the nilradical of $R$. Therefore, the results of this section can be applied to the  nilpotent commutative ring $\mathfrak N$ and to the $R$-brace associated to it,  whenever $\mathfrak N$ has ``small'' $R$-rank. 
In a forthcoming paper,  we will  study Fuchs' question  further giving the detail of this application, which allows us to make interesting progresses. 
}
\end{remark}

\section{Proof of Theorem~\ref{teo:isomorphic}}
\label{sec:proof}
The proof of Theorem~\ref{teo:isomorphic} is inspired from that of \cite[Theorem~1]{FCC}. We first establish the  preliminary result.

\begin{remark}
\label{rem:ff}
{\rm
 We notice that under the assumption of the theorem  $D/pD$ is a finite field. In fact, if $N$ is a non-trivial $D$-module of cardinality a power of $p$, then the annihilator $I$ of $N$ in $D$ is contained in $pD$. Now, $N$ is  finite and  is faithful over $D/I$, so that  $D/I$, and therefore also $D/pD$, must be finite.
}
\end{remark}

The following lemma refines Proposition~\ref{prop:lnilp}.
 \begin{lemma}
\label{lemma:dimension}
Let $K$ be a  finite field of characteristic $p$ and let $(M, +, \circ)$ be a $K$-module brace of dimension $r$.
Then $M^{r+1}=0$. 
\end{lemma}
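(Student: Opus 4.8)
The plan is to refine the descent argument behind Proposition~\ref{prop:lnilp} by working with $K$-dimensions in place of $p$-adic orders. First I would record the structural facts. Since $K$ is a finite field of characteristic $p$, write $|K| = p^{\lambda}$; then $(M,+)\cong K^{r}$ is finite of order $p^{\lambda r}$, so $(M,\circ)$ is a finite $p$-group. The gamma function is a group homomorphism $\gamma\colon (M,\circ)\to \Aut_K(M)$, whose image $G=\gamma(M)$ is therefore a finite $p$-group of $K$-linear automorphisms of $M$. By Lemma~\ref{lemma:*op}(ii) each term $M^{k}$ of the left series is a left $K$-ideal, hence a $K$-subspace of $M$; being a left ideal, $M^{k}$ is moreover stable under $G$.

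The key input is the classical fact that a finite $p$-group acting $K$-linearly on a nonzero finite-dimensional $K$-vector space $W$ over a field of characteristic $p$ has a nonzero fixed vector. Indeed, $G$ acts on the finite set $W$, whose cardinality $|K|^{\dim_K W}$ is divisible by $p$ when $W\neq 0$; since the number of fixed points is congruent to $|W|$ modulo $p$ and the point $0$ is fixed, that number is positive and divisible by $p$, so $W^{G}\neq 0$. Applying this to $M$ and iterating on the successive quotients, I would build a complete $G$-stable flag $0=V_{0}\subsetneq V_{1}\subsetneq\dots\subsetneq V_{r}=M$ with $\dim_K V_{i}=i$ on which $G$ acts unipotently, that is $(\gamma_{x}-\mathrm{id})(V_{i})\subseteq V_{i-1}$ for every $x\in M$ and every $i$.

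It then remains to match the left series to this flag. Recalling that $x\star y=\gamma_{x}(y)-y=(\gamma_{x}-\mathrm{id})(y)$ and $M^{k}=M\star M^{k-1}$, the $K$-subspace $M^{k}$ is spanned by the elements $(\gamma_{x}-\mathrm{id})(y)$ with $x\in M$ and $y\in M^{k-1}$, so $M^{k}=\sum_{x\in M}(\gamma_{x}-\mathrm{id})(M^{k-1})$ as $K$-subspaces (each $\gamma_{x}-\mathrm{id}$ being $K$-linear). An induction on $k$ gives $M^{k}\subseteq V_{r+1-k}$: the base case is $M^{1}=M=V_{r}$, and if $M^{k}\subseteq V_{r+1-k}$ then $M^{k+1}=\sum_{x\in M}(\gamma_{x}-\mathrm{id})(M^{k})\subseteq\sum_{x\in M}(\gamma_{x}-\mathrm{id})(V_{r+1-k})\subseteq V_{r-k}=V_{r+1-(k+1)}$. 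Taking $k=r+1$ yields $M^{r+1}\subseteq V_{0}=0$, as required.

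The only genuine obstacle is the flag construction, i.e. the fixed-vector lemma for $p$-groups in characteristic $p$ together with its iteration; everything else is bookkeeping. I would remark that the same scheme can be phrased without an explicit flag, by observing directly that $M^{k+1}=M\star M^{k}$ is a \emph{proper} $K$-subspace of $M^{k}$ whenever $M^{k}\neq 0$ (the coinvariants of a nonzero $K[G]$-module with $G$ a $p$-group in characteristic $p$ are nonzero), so that the $K$-dimensions of the $M^{k}$ strictly drop until they vanish. This makes transparent why the sharp bound $r$ replaces the weaker bound $\lambda r$ that Proposition~\ref{prop:lnilp} would give from $|M|=p^{\lambda r}$.
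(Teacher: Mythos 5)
Your proof is correct, but it reaches the conclusion by a different route than the paper. The paper's argument is very short: it invokes Proposition~\ref{prop:lnilp} to get that $M$ is left nilpotent (so the left series reaches $0$), observes that a chain with the stabilization property ($M^{i+1}=M^{i}$ forces $M^{i+j}=M^{i}$ for all $j$) which eventually reaches $0$ must be strictly decreasing until it does, and then uses Lemma~\ref{lemma:*op}(ii) --- each $M^{k}$ is a $K$-subspace --- to conclude that the $K$-dimension drops by at least one at each step, whence $M^{r+1}=0$. You instead bypass Proposition~\ref{prop:lnilp} entirely and reprove left nilpotence together with the sharp bound in one stroke: the fixed-point theorem for finite $p$-groups acting $K$-linearly in characteristic $p$ yields a complete $\gamma(M)$-stable unipotent flag, and the inclusion $M^{k}\subseteq V_{r+1-k}$ follows by induction from the identity $M^{k}=\sum_{x\in M}(\gamma_{x}-\mathrm{id})(M^{k-1})$, which you correctly justify using the $K$-linearity of each $\gamma_{x}-\mathrm{id}$. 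Your closing remark (the coinvariants of a nonzero finite module over the local ring $K[G]$, $G$ a $p$-group, are nonzero, so $M^{k+1}\subsetneq M^{k}$ whenever $M^{k}\neq 0$) is in fact the version closest to the paper's, differing only in how the strict decrease is established: Nakayama for $K[G]$ in place of the a priori termination of the series. What your version buys is self-containedness and a transparent explanation of why the bound is governed by $\dim_{K}M=r$ rather than by $\log_{p}|M|=\lambda r$; what the paper's version buys is brevity, since Proposition~\ref{prop:lnilp} is already in place. Both arguments rest on the same essential input, namely Lemma~\ref{lemma:*op}(ii).
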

\begin{proof}
$M$ is a $p$-group, so, by Proposition \ref{prop:lnilp}, $M$ is left nilpotent, namely $M^k=0$ for some $k>0$. This implies that the left series of $M$  is strictly  decreasing  to 0
\begin{equation}
\label{eq:chain}
M\supsetneq M^{2}\supsetneq   \dots \supsetneq\{0\}.
\end{equation}

Now,  by Lemma~\ref{lemma:*op},  in  \eqref{eq:chain} we have a chain of $K$-vector spaces, and, being strictly decreasing,  $\dim_KM^{(i)}\le r-i+1$, so $M^{r+1}\nobreak=\nobreak0$
\end{proof}

We  use the  notation  $m_\circ a=a\circ a\circ\dots\circ a$ ($m$ factors). We can easily obtain the following formula (see also \cite[(3.2)]{CDC22})
\begin{equation}  \begin{aligned}
\label{eq:p_pallino_a}
p_\circ a&=(\gamma_a^{p-1} + \dots + \gamma_a + 1)(a)\\
&=(p +
      \binom{p}{2} \delta_a
      + \dots +
      \binom{p}{p-1} \delta_a^{p-2})(a)
+
  \delta_a^{p-1}(a)
\end{aligned}
\end{equation}
where we are using the notation $\delta_a=\gamma_a-id$.
  
The following proposition is the analogue of \cite[Proposition 4]{FCC}.
\begin{prop}
\label{prop:4}
Let $K$ be a finite field of characteristic $p$. Let  $(M, +, \circ)$ be  a  $K$-brace of dimension $r<p-1$. Then all the non-zero elements of $(M,\circ)$ have order $p$.

In particular, if $(M,\circ)$ is abelian $(M,\circ)$ and  $(M,+)$ are isomorphic groups.
\end{prop}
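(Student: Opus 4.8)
The plan is to show that every nonzero element $a \in M$ satisfies $p_\circ a = 0$, using the explicit formula \eqref{eq:p_pallino_a} together with the dimension bound $r < p-1$. The key observation is that $\delta_a = \gamma_a - \mathrm{id}$ measures the failure of $\gamma_a$ to be the identity, and that iterating $\delta_a$ lands us in successive terms of the left series. First I would record that for any $a \in M$ and any $y \in M$ we have $\delta_a(y) = \gamma_a(y) - y = a \star y \in M^2$, and more generally $\delta_a^{\,k}(y) \in M^{k+1}$, since each application of $\delta_a$ is a $\star$-multiplication on the left by (something built from) $a$ and hence raises the left-series index by one. This is just the characterisation of $M^k$ as the iterated commutator $[\gamma(M),\dots,\gamma(M),M]$ recalled before Proposition~\ref{prop:lnilp}, combined with part (ii) of Lemma~\ref{lemma:*op}.

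Next I would invoke Lemma~\ref{lemma:dimension}, which gives $M^{r+1} = 0$ for a $K$-brace of dimension $r$. Applying this with the observation above, $\delta_a^{\,k}(a) \in M^{k+1}$, so $\delta_a^{\,k}(a) = 0$ as soon as $k+1 > r$, i.e.\ for all $k \ge r$. In the right-hand side of \eqref{eq:p_pallino_a}, the top term is $\delta_a^{p-1}(a)$ and the remaining terms carry binomial coefficients $\binom{p}{j}$ for $1 \le j \le p-1$, each of which is divisible by $p$ and hence vanishes on the characteristic-$p$ module $M$; the leading constant $p \cdot a$ also vanishes. Thus the entire expression collapses to $\delta_a^{\,p-1}(a)$. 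Since $r < p-1$ forces $p-1 \ge r+1 > r$, we get $\delta_a^{\,p-1}(a) \in M^{p} \subseteq M^{r+1} = 0$. Therefore $p_\circ a = 0$ for every $a$, so every nonzero element of $(M,\circ)$ has order exactly $p$ (order $p$, not less, because $(M,\circ)$ is a nontrivial $p$-group whose elements all have order dividing $p$).

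For the final clause, suppose $(M,\circ)$ is abelian. Then $(M,\circ)$ is a finite abelian $p$-group all of whose nonzero elements have order $p$, hence an elementary abelian $p$-group, i.e.\ $(M,\circ) \cong (\Z/p\Z)^{d}$ for some $d$. On the other hand $(M,+)$ is the additive group of a $K$-vector space of dimension $r$ over the field $K$ of characteristic $p$; writing $|K| = p^{f}$, its additive group is $(\Z/p\Z)^{rf}$, which is likewise elementary abelian. Since both groups have the same cardinality $|M| = p^{rf}$ and are both elementary abelian $p$-groups, they are isomorphic, giving $(M,+) \cong (M,\circ)$.

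The main obstacle I anticipate is the bookkeeping in the first step: one must argue carefully that repeated application of $\delta_a$ genuinely advances the left-series filtration, i.e.\ that $\delta_a^{\,k}(a) \in M^{k+1}$ rather than merely $\delta_a^{\,k}(a) \in M^2$. This rests on reading off from \eqref{eq:op-star} that $\delta_a(y) = a \star y$ and then on the inductive identity $M^{k+1} = M \star M^{k} = [\gamma(M), M^{k}]$, so that $\delta_a$ maps $M^{k}$ into $M^{k+1}$; granting this, the vanishing of $p_\circ a$ is immediate from the binomial-coefficient divisibilities and the degree count $p-1 > r$.
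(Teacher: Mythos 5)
Your proposal is correct and follows essentially the same route as the paper: it uses the expansion \eqref{eq:p_pallino_a} of $p_\circ a$, kills the term $\delta_a^{p-1}(a)$ via $\delta_a^{p-1}(a)\in M^{p}\subseteq M^{r+1}=0$ from Lemma~\ref{lemma:dimension}, and kills the remaining terms because their coefficients are divisible by $p$ and $M$ is elementary abelian. The only difference is that you spell out the filtration bookkeeping $\delta_a^{k}(y)\in M^{k+1}$ and the final elementary-abelian comparison more explicitly than the paper does, which is harmless.
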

\begin{proof}
Let $K=\F_{p^\lambda}$, then $K$ as a group is isomorphic to $(\Z/p\Z)^\lambda$, so that $M$, which is isomorphic to  $K^r$ as a $K$-vector space, is an elementary  abelian $p$-group.

Our goal is to prove that all non-trivial elements of $(M,\circ)$ have order $p$.

Now,  $\delta_a^{p-1}(a)\in M^{p}$, and     by Lemma~\ref{lemma:dimension} $M^{p}=0$, so  Equation~\eqref{eq:p_pallino_a} reduces to 
$$p\circ a=(p +
      \binom{p}{2} \delta_a
      + \dots +
      \binom{p}{p-1} \delta_a^{p-2})(a),$$ 
      which  gives $p_\circ a=0$ for all $a\in M$.
\end{proof}
The next easy lemma is an immediate  generalisation of \cite[Lemma~2.7]{Bac16}.
 \begin{lemma}
 \label{lemma:2.7}
{\sl 
 Let $p$ be a prime and let $D$ be a PID in which $p$ is prime.  Let $N$ be a $D$-module of cardinality a power of $p$ and $D$-rank $r$. Then, for each $f\in \Aut_D(N)$ of order a power of $p$, $(f-id)^r(N)\subseteq pN$.
 }
 \end{lemma}
\begin{proof}

$pN$ is a characteristic subgroup  (and a $D$-submodule) of $N$, so, $f$ induces an automorphism $\bar f$ of $N/pN$. 
Since $N/pN$ is  a vector space over $K=D/pD$ of dimension $ r$, then the authomorphism$\bar f$ can be identified with a matrix $F\in{\rm GL}_r( K)$. As $F$ has order a power of $p$ and the characteristic of $K$ is $p$, then $F$ is conjugated to an upper triangular matrix $r\times r$ whose diagonal entries are 1. Therefore, $(F-Id)^r=0$, so $(f-id)^r(N)\subseteq pN.$
\end{proof}

For a  finite abelian $p$-group  $(G, \times)$, and  $i \ge 0$,  we will  denote by
$\Omega_{i}(G, \times)$  the set of elements of  $(G, \times)$  of order
dividing $p^{i}$. Clearly if $(G, \times)$ is abelian, then $\Omega_{i}(G, \times)$ a subgroup of $(G, \times)$.

The key point for proving the theorem is the following lemma
    \begin{lemma} 
    \label{lemma:prop5}
  Let everything be as in the statement   of Theorem ~\ref{teo:isomorphic}.
  Then,
  \begin{equation}
      \label{eq:a_tale_of_two_Omegas}
      \Omega_{i+1}(N, +) \setminus \Omega_{i}(N, +)
      \subseteq
      \Omega_{i+1}(N, \circ) \setminus \Omega_{i}(N,\circ),
    \end{equation}
      for each $i \ge 0$.
   \end{lemma}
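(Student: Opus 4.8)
The plan is to exploit the filtration given by the left series, whose successive quotients are trivial $D$-braces on which the additive and multiplicative operations coincide, and to argue by descending induction along this filtration. First I would fix $a \in \Omega_{i+1}(N,+)\setminus\Omega_i(N,+)$, so that $a$ has additive order exactly $p^{i+1}$, and try to show that $a$ has multiplicative order dividing $p^{i+1}$ but not dividing $p^i$; by the symmetry of the statement (swapping the roles of $+$ and $\circ$, which are interchangeable since $N$ is also a $D$-brace for the opposite structure) it would suffice to prove one of the two inclusions $\Omega_{i+1}(N,+)\subseteq\Omega_{i+1}(N,\circ)$ and the reverse on the complements, and then combine them. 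Throughout I would work with $\delta_a=\gamma_a-\mathrm{id}$ as in \eqref{eq:p_pallino_a}, which reduces computations of $p_\circ a$, and more generally of $p^k_\circ a$, to additive expressions involving iterates of $\delta_a$.

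The central computation is the iterated analogue of \eqref{eq:p_pallino_a}: I would establish a formula expressing $p^{k}_\circ a$ as $p^{k}a$ plus a sum of terms each lying in a sufficiently high term of the left series $N^j$, using that $\delta_a(x)=\gamma_a(x)-x = a\star(\text{something})$ pushes elements into the star-product filtration. The key leverage is Lemma~\ref{lemma:2.7}: since $\gamma_a\in\Aut_D(N)$ has $p$-power order and $\mathrm{rank}_D N = r < p-1$, we have $\delta_a^{\,r}(N)\subseteq pN$, so $\delta_a^{\,r}$ lowers the additive order by at least one factor of $p$. Because $r<p-1$, the binomial coefficients $\binom{p}{j}$ appearing in \eqref{eq:p_pallino_a} for $1\le j\le p-1$ are all divisible by $p$, and the tail term $\delta_a^{\,p-1}(a)$ vanishes by the dimension bound (Lemma~\ref{lemma:dimension} applied after reduction, or directly since $p-1>r$ forces $\delta_a^{\,p-1}=0$ on the relevant quotients). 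The upshot I would aim for is a congruence of the shape $p^k_\circ a \equiv p^k a \pmod{p^{k+1}N}$, or more precisely that $p^k_\circ a$ and $p^k a$ generate the same additive cyclic subgroup up to the ambiguity controlled by $pN$; this directly ties the multiplicative order of $a$ to its additive order.

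Concretely, to show the inclusion \eqref{eq:a_tale_of_two_Omegas} I would argue: if $a$ has additive order $p^{i+1}$, then $p^{i+1}a=0$, and the congruence gives $p^{i+1}_\circ a \in p^{i+2}N$; a further short induction or a direct order argument then yields $p^{i+1}_\circ a = 0$, so $a\in\Omega_{i+1}(N,\circ)$. For the non-membership, if we had $a\in\Omega_i(N,\circ)$, i.e. $p^i_\circ a=0$, the same congruence read modulo $p^{i+1}N$ would force $p^i a\in p^{i+1}N$, contradicting that $a\notin\Omega_i(N,+)$ (its additive order being exactly $p^{i+1}$, $p^i a$ is a nonzero element killed by $p$, hence not in $p^{i+1}N$ once one checks the relevant quotient is nontrivial). \emph{The main obstacle} I anticipate is making the error terms precise: one must verify that every term other than the leading $p^k a$ genuinely lands in $p^{k+1}N$, which requires combining the $p$-divisibility of the binomial coefficients with the filtration estimate $\delta_a^{\,r}(N)\subseteq pN$ in a way that survives the iteration $k\mapsto k+1$, keeping careful track of how the order of $\gamma_a$ (itself a power of $p$ by Proposition~\ref{prop:lnilp} applied to the $p$-group $(N,\circ)$) interacts with the condition $r<p-1$. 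Once this bookkeeping is done uniformly in $k$, the two inclusions follow and the lemma is proved.
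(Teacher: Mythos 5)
Your plan has a genuine gap at its central step. The congruence $p^k_\circ a \equiv p^k a \pmod{p^{k+1}N}$ that you want to establish is not justified and is too strong: in \eqref{eq:p_pallino_a} the coefficients $\binom{p}{j}$ for $2\le j\le p-1$ are divisible by $p$ only once, so the terms $\binom{p}{j}\delta_a^{j-1}(a)$ land in $pN$ but not, in general, in $p^2N$; likewise Lemma~\ref{lemma:2.7} only gives $\delta_a^{p-1}(a)\in pN$ when $r<p-1$, not $p^2N$. Worse, even if such a congruence held, your concluding contradiction fails: from $p^i_\circ a=0$ you would deduce $p^ia\in p^{i+1}N$, and you claim this contradicts $a\notin\Omega_i(N,+)$ because ``$p^ia$ is a nonzero element killed by $p$, hence not in $p^{i+1}N$.'' That implication is false: take $N=\Z/p\Z\oplus\Z/p^{i+2}\Z$ and $a=(1,p)$; then $a$ has additive order exactly $p^{i+1}$ while $p^ia=(0,p^{i+1})$ is a nonzero element of $p^{i+1}N$. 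So the filtration by the subgroups $p^kN$ is simply the wrong one to measure orders. (A side issue: your appeal to ``symmetry'' between $+$ and $\circ$ is unfounded --- $(N,\circ,+)$ is not in general a brace; that would be the bi-skew property. No such symmetry is needed: the containment of differences, together with the fact that the sets $\Omega_{i+1}\setminus\Omega_i$ partition $N$ for both operations, already forces equality.)

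The paper's proof avoids both problems by inducting along the filtration $\Omega_i(N,+)$ rather than $p^kN$, so that only a single application of \eqref{eq:p_pallino_a} is ever needed (no iteration over $k$). For the inclusion $\Omega_{i+1}(N,+)\subseteq\Omega_{i+1}(N,\circ)$ one reads \eqref{eq:p_pallino_a} modulo $\Omega_i(N,+)$: all binomial terms die because $p\,\delta_a^j(a)\in\Omega_i(N,+)$, and $\delta_a^{p-1}(a)\in p\,\Omega_{i+1}(N,+)\subseteq\Omega_i(N,+)$ by Lemma~\ref{lemma:2.7} applied to the left ideal $\Omega_{i+1}(N,+)$; the base case $i=1$ is Proposition~\ref{prop:4}. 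For the non-membership $a\notin\Omega_i(N,\circ)$ the paper reduces to showing $p_\circ a\notin\Omega_{i-1}(N,+)$ and then works inside the quotient $\Omega_{i+1}(N,+)/\Omega_{i-1}(N,+)$, introducing the $D$-submodule $S$ generated by the classes of the iterates $\delta_a^j(a)$ and its strictly decreasing filtration $S_k$. The pivot is that if $p\bar a\in S_k\setminus S_{k+1}$ then $S/S_k$ is a $K$-vector space of dimension $k-1<p-1$, whence $\delta_a^{p-1}(a)\in S_{k+1}$ and $p_\circ a\equiv pa\not\equiv 0\pmod{S_{k+1}}$. This ad hoc filtration, adapted to the single element $a$, is exactly the device that replaces your too-coarse reduction modulo $p^{i+1}N$, and it is the missing idea in your proposal.
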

  \begin{proof}
  First of all, we notice that in our case the subgroups $\Omega_i(N,+)$'s are characteristic in $(N,+)$ and therefore, they are also left $D$-ideals of the $D$-brace $(N,+,\circ)$, in particular each of them is a $D$-brace. 
  
  We start by proving that for each $i>0$
   \begin{equation}
      \label{eq:a_tale_of_one_Omega}
      \Omega_{i}(N, +)
      \subseteq
      \Omega_{i}(N, \circ).
    \end{equation}
   
We first consider the case $i=1$.  
 
As a $D$-module, $\Omega_1(N,+)$ is annihilated by $pD$, therefore it is a $K$-vector space of dimension $<p-1$, where $K=D/pD$ is a finite field (see Remark~\ref{rem:ff}). Proposition~\ref{prop:4} applies, giving $\Omega_{1}(N, +) \subseteq \Omega_{1}(N, \circ)$.

Let $i>0$ and assume by induction that
$$ \Omega_{i}(N, +) 
      \subseteq
      \Omega_{i}(N, \circ).
      $$
 Let $a\in\Omega_{i+1}(N, +)$. Then $pa^j\in \Omega_{i}(N, +)$ for all $j\ge1$, so from Equation~\eqref{eq:p_pallino_a} we get that $p_\circ a\equiv  \delta_a^{p-1}(a)=(\gamma_a-id)^{p-1}(a)\pmod{ \Omega_{i}(N, +)}$. 
 Since the rank of $\Omega_{i+1}(N, +)$ is $ < p-1$,  by Lemma~\ref{lemma:2.7}, we get $ \delta_a^{p-1}(a)\in p\Omega_{i+1}(N, +)\subseteq\Omega_{i}(N, +) $, so 
 $p_\circ a\in\Omega_{i}(N, +)\subseteq \Omega_{i}(N, \circ).$
Therefore, $a\in \Omega_{i+1}(N, \circ)$, proving the first statement. 
 
 We turn now to proving \eqref{eq:a_tale_of_two_Omegas}. Let $i>0$ and and assume by induction that
$$ \Omega_{i}(N, +) \setminus \Omega_{i-1}(N, +)
      \subseteq
      \Omega_{i}(N, \circ) \setminus \Omega_{i-1}(N,\circ).$$
      We have to show that
$$\Omega_{i+1}(N, +) \setminus \Omega_{i}(N, +)
      \subseteq
      \Omega_{i+1}(N, \circ) \setminus \Omega_{i}(N,\circ).
      $$
      Let $a\in\Omega_{i+1}(N, +) \setminus \Omega_{i}(N, +)$. We have already seen that  $a\in\Omega_{i+1}(N, \circ)$, so we are left to prove that $a$ is not in $\Omega_{i}(N, \circ)$. This can be reduced to showing that $p_\circ a\not\in \Omega_{i-1}(N,+)$. In fact, this implies $p_\circ a \in  \Omega_{i}(N, +) \setminus \Omega_{i-1}(N, +)
      \subseteq
      \Omega_{i}(N, \circ) \setminus \Omega_{i-1}(N,\circ),$ and this ensures that     $a$ is not in $\Omega_{i}(N, \circ)$. 
      
      To show that $p_\circ a$ is not in  $\Omega_{i-1}(N, +)$ we will prove that its class is not 0 in the quotient 
      $\Omega_{i+1}(N, +)/ \Omega_{i-1}(N, +).$ 
      
Denote by $\lowoverline{\delta_a^j( a)}$ the class of $\delta_a^j(a)$ in the $D$-module  $\Omega_{i+1}(N, +)/ \Omega_{i-1}(N, +)$. Define 
$S$ to be the $D$-submodule of $\Omega_{i+1}(N, +)/ \Omega_{i-1}(N, +)$ generated by  $\{\lowoverline{\delta_a^j( a)}\}_{j\ge0}$, and, for each $i\ge1$,
$$S_i=\langle  \lowoverline{\delta_a^j( a)}\mid j\ge i-1\rangle_D$$   
We get the strictly decreasing chain  
$$S=S_1\supset S_{2}\supset\dots\supset S_{t}\supset\{0\}$$
for some $t$. 
In fact, clearly $\delta_a(S_i)= S_{i+1}\subseteq S_i$; moreover, $S_n=0$ for $n$ sufficiently large, since using Lemma~\ref{lemma:2.7}, we get that if $p^lN=0$, then $\delta_a^{rl}(a)\in p^lN=0$, so $S_{rl+1}=0$. Finally, if $S_{i+1}=S_i$ then $S_{i+k}=S_i$ for all $k\ge0$, therefore this can only happen if $S_i=0$.

By Equation~\eqref{eq:p_pallino_a} 
$$p_\circ\bar a =
(p +
      \binom{p}{2} \delta_a
      + \dots +
      \binom{p}{p-1} \delta_a^{p-2})(\bar a)
+
  \delta_a^{p-1}(\bar a).
$$

If $p\bar a$ is not in $S_2$, then $p_\circ\bar a=p\bar a+s_2$, with $s_2\in S_2$, is not 0, so $p_\circ \bar a\not\in \Omega_{i-1}(N,+)$. Let $p\bar a\in S_2$.
Since  $\bar a$ has order $p^2$ , then $p\bar a$ is non-zero and   it belongs to $S_k\setminus S_{k+1}$ for some $k\ge2$.
This implies that the $D$-module $S/S_k$ is annihilated by $p$ so it is a $K$-module  and its dimension is $<p-1$.

On the other hand, using the notation $[s]$ for the class of $s$ in the quotient  $S/S_{k}$, we claim that $[\bar a], [\lowoverline{\delta_a( a)}], \dots,[\lowoverline{\delta_a^{k-2}(a)}]$ is a $K$-basis of  $S/S_k$. In fact, it is clear that they are a set of generators; if  they were linearly dependent then we would have  $\sum_{i=i_0}^{k-2}\lambda_i [\lowoverline{\delta_a^{i}(a)}]=0$ for some $\lambda_i\in K$ and $\lambda_{i_0}\ne 0$, so, by applying $\lowoverline{\delta_a^{k-i_0-2}}$ to the previous equality  we would get  $[\lowoverline{\delta_a^{k-2 }(a)}]=0$ and therefore $S_k=S_{k-1}$, a contradiction. 

This proves that the dimension over $K$ of $S/S_k$ is $k-1$, so  $k-1<p-1$, hence $ p-1\ge k$ and $\lowoverline{\delta_a^{p-1}(a)}\in S_{k+1}$. 

Finally, taking into account that $p\bar a\in S_k$ implies $p\delta_a^i(\bar a)\in S_{k+1}$ for all $i\ge1$, from the previous equation we get $p_\circ a\equiv pa\not\equiv0\pmod{S_{k+1}}$, therefore  it is not zero in the quotient $\Omega_{i+1}(N, +)/ \Omega_{i-1}(N, +)$. This proves that  $p_\circ a\not\in \Omega_{i-1}(N, +)$.
\end{proof}
\begin{proof}[Proof of Theorem~\ref{teo:isomorphic}]
Consider the  two partitions of $N$
\begin{equation*} 
  N
  =
  \ \cdot \hspace{-10.6pt}\bigcup_{i\ge0}\Omega_{i+1}(N, +)
  \setminus
  \Omega_{i}(N, +)
  =
  \ \cdot \hspace{-10.6pt}\bigcup_{i\ge0}\Omega_{i+1}(N, \circ)
  \setminus
  \Omega_{i}(N,\circ).
\end{equation*}
The last equality together
with~\eqref{eq:a_tale_of_two_Omegas} gives, for each $i \ge 0$,  
\begin{equation*}
  \Omega_{i+1}(N, +) \setminus \Omega_{i}(N, +)
  =
  \Omega_{i+1}(N, \circ) \setminus \Omega_{i}(N,\circ),
\end{equation*} 
showing that  in this case  the order of each  element is the  same in
$(N,  +)$ and  $(N,\circ)$. 

In particular, if   $(N, +)$ and $(N, \circ)$ are both abelian, they must be isomorphic.
\end{proof}

\bibliographystyle{amsalpha}
 
\bibliography{biblio}

\end{document}